\newcommand{\pl}[1]{\foreignlanguage{polish}{#1}}
\renewcommand{\d}{\,{\rm d}}
\numberwithin{equation}{section}
\def\R{\mathbb{R}}
\def\N{\mathbb{N}}
\def\Co{\mathbb{C}}
\newtheorem{theorem}{Theorem}
\newtheorem{proposition}[theorem]{Proposition}
\newtheorem{lemma}[theorem]{Lemma}
\title[Dimension-free restriction inequalities]{Dimension-free Fourier restriction inequalities}
\author[D. Oliveira e Silva]{Diogo Oliveira e Silva}
\address[Diogo Oliveira e Silva]{ 
Center for Mathematical Analysis, Geometry and Dynamical Systems \&
Departamento de Matem\'{a}tica\\ 
Instituto Superior T\'{e}cnico\\
Av.\@ Rovisco Pais\\ 
1049-001 Lisboa, Portugal} 
\email{diogo.oliveira.e.silva@tecnico.ulisboa.pt}
\author[B. Wr{\'o}bel]{B{\l}a{\.z}ej Wr{\'o}bel}
\address[B{\l}a{\.z}ej Wr{\'o}bel]{
Institute of Mathematics
	of the Polish Academy of Sciences\\
	Śniadeckich 8\\
	00-656 \pl{Warsaw}\\
	Poland \& Institute of Mathematics\\
	University of Wrocław\\
	Plac Grun\-waldzki 2\\
	50-384 \pl{Wrocław}\\
	Poland}
\email{blazej.wrobel@math.uni.wroc.pl}
\subjclass[2020]{42B10}
\keywords{Fourier restriction theory, dimension-free estimates, Bessel functions.} 
\begin{document}
\begin{abstract}
Let ${{\bf R}_{\mathbb S^{d-1}}}(p\to q)$ denote the best constant for the $L^p(\R^d)\to L^q(\mathbb S^{d-1})$ Fourier restriction inequality to the unit sphere $\mathbb S^{d-1}$, and let ${\bf R}_{\mathbb S^{d-1}} (p\to q;\textup{rad})$ denote the corresponding constant for radial functions. We investigate the asymptotic behavior of the operator norms ${{\bf R}_{\mathbb S^{d-1}}}(p\to q)$ and ${\bf R}_{\mathbb S^{d-1}} (p\to q;\textup{rad})$ as the dimension $d$ tends to infinity.  We further establish  a dimension-free endpoint Stein–Tomas inequality for
radial functions, together with the corresponding estimate for general functions which we
prove with an $O(d^{1/2})$ dependence.
Our methods rely on a uniform two-sided refinement of Stempak's asymptotic $L^p$ estimate of Bessel functions.
\end{abstract}

\maketitle

\section{Introduction}

The Fourier restriction problem to the unit sphere $\mathbb S^{d-1}\subset\R^{d}$ aims at finding exactly those triples $(p,q,d)$ for which  the inequality 
\begin{equation}\label{eq_restr}
    \|\widehat f\|_{L^q(\mathbb S^{d-1})} \leq {{\bf R}_{\mathbb S^{d-1}}} (p\to q) \|f\|_{L^p(\R^{d})}
\end{equation}
holds,  where ${\bf R}_{\mathbb S^{d-1}} (p\to q):=\sup_{\|f\|_p =1} \|\widehat f\|_{L^q(\mathbb S^{d-1})}$ denotes the best constant. The Fourier transform is given by 
\begin{equation}\label{eq_FT}
    \widehat f(\xi):= \int_{\R^d} f(x) e^{-2\pi i x\cdot\xi}\,\d x.
\end{equation}
Stein's restriction conjecture \cite[Problem 2]{St79} asserts that ${{\bf R}_{\mathbb S^{d-1}}} (p\to q)<\infty$  if and only if 
\begin{equation} 
\label{eq: recond}
1\leq p<\frac{2d}{d+1} \text{ and } q\leq\frac{d-1}{d+1}p'.\end{equation}
This has been established when $d=2$, see \cite{Fef} for the non-endpoint case $p'>3q$ and \cite{CS72, Zy74} for the endpoint $p'=3q$, but is open for all $d\geq 3$. 

\medskip

Inequality \eqref{eq_restr} can be taken over  radial functions $f=f(|\cdot|)\in L^p$ only:
\begin{align}
    \|\widehat f\|_{L^q(\mathbb S^{d-1})} \leq {{\bf R}_{\mathbb S^{d-1}}} &(p\to q;\textup{rad}) \|f\|_{L^p(\R^{d})},\text{ where }\label{eq_radrestr}\\
    {\bf R}_{\mathbb S^{d-1}} (p\to q;\textup{rad})&:=\sup_{\|f\|_p =1, f \text{ radial}} \|\widehat f\|_{L^q(\mathbb S^{d-1})}.\notag
\end{align}
It is an early observation of L.\@ Schwartz that  ${{\bf R}_{\mathbb S^{d-1}}} (p\to q;\textup{rad})<\infty$ if and only if $1\leq p<2d/(d+1)$; see \cite[\S11, Notes]{MS13} and the historical account in \cite[\S 5]{St76}.
 For general functions $f\in L^p$, the Stein--Tomas inequality \cite{St93,To75} states that 
 \[{{\bf R}_{\mathbb S^{d-1}}} (p\to 2)<\infty, \text{ for every } d\geq 2 \text{ and }1\leq p\leq p_\bullet:=2\frac{d+1}{d+3}.\] 
  The Stein--Tomas exponent $p_\bullet$ is endpoint in light of Knapp's construction \cite[Lemma 3]{St77}, which together with the previous observation regarding radial functions ensures that ${{\bf R}_{\mathbb S^{d-1}}} (p\to q)=\infty$ whenever  \eqref{eq: recond} does not hold. 
  \medskip

  We say that the {\it $(p,q)$ dimension-free restriction} holds  if ${\bf R}_{\mathbb S^{d-1}} (p\to q)\leq C_{p,q}$, where $C_{p,q}<\infty$ is independent of $d$. 
Our main result is conveniently  stated in terms of the following three Riesz regions of exponents\footnote{The point $(\frac1p,\frac1q)=(1,0)$ is special, as ${\bf R}_{\mathbb S^{d-1}} (1\to \infty)=1$ for every $d\geq 2$.}
 $(1/p,1/q)\in[0,1]^2$ which are depicted in Fig.\@ \ref{fig_dimfreeRest}:
\begin{align*}
    \textup{A}&:=\{(\tfrac1p,\tfrac1q): 1\leq p<2, 1\leq q<p'\};\\
    \textup{B}&:=\{(\tfrac1p,\tfrac1q): 1< p<2, p'\leq q\leq\infty\};\\
    \textup{C}&:=\{(\tfrac1p,\tfrac1q):2\leq p\leq \infty, 1\leq q\leq \infty\}.
\end{align*}
Since $p_\bullet\to 2$ and $(d-1)/(d+1)\to 1$ as $d\to\infty$, the above discussion (together with interpolation and Hölder's inequality) implies that: 
\begin{itemize}
    \item ${\bf R}_{\mathbb S^{d-1}} (p\to q)<\infty$ for sufficiently large $d$ if $(\frac1p,\frac1q)\in \textup{A};$ 
    \item ${\bf R}_{\mathbb S^{d-1}} (p\to q)=\infty$, but ${{\bf R}_{\mathbb S^{d-1}}} (p\to q;\textup{rad})<\infty$,  for all  $d\geq 2$ if $(\frac1p,\frac1q)\in \textup{B};$ 
    \item ${{\bf R}_{\mathbb S^{d-1}}} (p\to q;\textup{rad})=\infty$, and therefore ${{\bf R}_{\mathbb S^{d-1}}} (p\to q)=\infty$, for all $d\geq 2$ if $(\frac1p,\frac1q)\in \textup{C}$. 
\end{itemize}
    Consequently, the quest for dimension-free restriction inequalities is only sensible within regions A and B. This is addressed in our first  main result.

    \begin{theorem}\label{thm_main}
        If $(\frac1p,\frac1q)\in \textup{A}$, then  $(p,q)$ dimension-free restriction holds in all sufficiently large dimensions $d$. More precisely, if $(\frac1p,\frac1q)\in \textup{A}$, then 
      \begin{equation}
      \label{eq_GenConv0}
      {\bf R}_{\mathbb S^{d-1}} (p\to q)\to 0,\text{ as }d\to\infty.
      \end{equation}
        Furthermore, if $(\frac1p,\frac1q)\in \textup{B}$ and $q=p'$, then 
         \begin{equation}\label{eq_RadConv0}
        {\bf R}_{\mathbb S^{d-1}} (p\to q;\textup{rad})\to 0, \text{ as }d\to\infty.
        \end{equation}
        On the other hand,  if  $(\frac1p,\frac1q)\in \textup{B}$ and $q>p'$, then 
        \begin{equation}\label{eq_RadDivInf}
            {\bf R}_{\mathbb S^{d-1}} (p\to q;\textup{rad})\to\infty,\text{ as }d\to\infty.
        \end{equation}    
    \end{theorem}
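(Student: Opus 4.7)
I would treat the radial statements \eqref{eq_RadConv0}--\eqref{eq_RadDivInf} and the general statement \eqref{eq_GenConv0} by distinct arguments: the former reduce to an exact Bessel-norm identity, while the latter follows from Riesz--Thorin interpolation against the Stein--Tomas endpoint.

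For the radial parts, begin from the Bessel representation $\widehat f(\xi)\big|_{|\xi|=1} = 2\pi\int_0^\infty f_0(r)\,J_{(d-2)/2}(2\pi r)\,r^{d/2}\,\d r$ valid for radial $f(x)=f_0(|x|)$. Since this is a scalar, $\|\widehat f\|_{L^q(\mathbb S^{d-1})}=|\mathbb S^{d-1}|^{1/q}|\widehat f(\xi_0)|$; substituting $g(r)=f_0(r)r^{(d-1)/p}$ and saturating H\"older's inequality produces the exact formula
\[
{\bf R}_{\mathbb S^{d-1}}(p\to q;\textup{rad}) = 2\pi\,|\mathbb S^{d-1}|^{1/q-1/p}\,\bigl\|J_{(d-2)/2}(2\pi r)\,r^{d/2-(d-1)/p}\bigr\|_{L^{p'}(\d r)}.
\]
I would then apply Stirling to estimate $|\mathbb S^{d-1}|$ and the uniform two-sided refinement of Stempak's estimate (the technical heart of the paper) to estimate the Bessel $L^{p'}$-norm. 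At $q=p'$ the two super-exponential-in-$d$ factors have matching leading asymptotics that cancel exactly, leaving a residual tending to zero, which proves \eqref{eq_RadConv0}. Since the Bessel factor is independent of $q$, the above formula also yields the identity ${\bf R}_{\mathbb S^{d-1}}(p\to q;\textup{rad}) = |\mathbb S^{d-1}|^{1/q-1/p'}\,{\bf R}_{\mathbb S^{d-1}}(p\to p';\textup{rad})$. For $q>p'$ the prefactor $|\mathbb S^{d-1}|^{1/q-1/p'}$ grows super-exponentially (since $|\mathbb S^{d-1}|\to 0$ and $1/q-1/p'<0$), overwhelming the much slower decay of the second factor and forcing \eqref{eq_RadDivInf}.

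For \eqref{eq_GenConv0} I would interpolate against three reference inequalities: ${\bf R}_{\mathbb S^{d-1}}(1\to\infty)\le 1$, the trivial ${\bf R}_{\mathbb S^{d-1}}(1\to q_0)\le|\mathbb S^{d-1}|^{1/q_0}$ for $q_0\ge 1$, and the Stein--Tomas bound ${\bf R}_{\mathbb S^{d-1}}(p_\bullet\to 2)\lesssim d^{1/2}$ (established elsewhere in the paper). For fixed $(\tfrac1p,\tfrac1q)\in\textup{A}$ with $q\ge p$, note that $p_\bullet\to 2$ forces $p<p_\bullet$ for $d$ large, so one may write $(\tfrac1p,\tfrac1q)=(1-\theta)(1,\tfrac1{q_0})+\theta(\tfrac1{p_\bullet},\tfrac12)$ with $\theta$ and $1/q_0$ in $(0,1)$ in the limit; Riesz--Thorin then yields ${\bf R}_{\mathbb S^{d-1}}(p\to q)\lesssim|\mathbb S^{d-1}|^{(1-\theta)/q_0}\,d^{\theta/2}\to 0$, because the super-exponential decay of $|\mathbb S^{d-1}|$ crushes $d^{\theta/2}$. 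The remaining subregion of A with $q<p$ reduces to $q=p$ via H\"older on the sphere, $\|\cdot\|_{L^q(\mathbb S^{d-1})}\le|\mathbb S^{d-1}|^{1/q-1/p}\|\cdot\|_{L^p(\mathbb S^{d-1})}$, with both factors tending to zero.

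The main obstacle is the sharp radial analysis: a pointwise estimate of $J_\nu$ gives only one-sided bounds, whereas the cancellation needed at $q=p'$ and the lower bound on ${\bf R}_{\mathbb S^{d-1}}(p\to p';\textup{rad})$ that drives \eqref{eq_RadDivInf} both demand a \emph{two-sided} asymptotic for the weighted Bessel $L^{p'}$-norm, uniform as the order, the weight exponent, and $p'$ all vary with $d$. This is the role of the refined Stempak estimate, which must track the contributions from the evanescent region $r\ll\nu/(2\pi)$, the Airy turning zone $r\sim\nu/(2\pi)$, and the oscillatory tail $r\gg\nu/(2\pi)$ with explicit $d$-dependent constants.
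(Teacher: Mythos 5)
Your proposal follows essentially the same route as the paper's proof, using the same three ingredients in the same roles: the exact Bessel formula for the radial operator norm (with Hölder saturated by $|f(r)|^p$ proportional to $|J_\nu(2\pi r) r^{1-d/2}|^{p'}$), the two-sided uniform Stempak estimate combined with Stirling, and Riesz--Thorin interpolation of the $O(\sqrt d)$ Stein--Tomas bound against the trivial $L^1\to L^{q_0}$ estimate ${\bf R}_{\mathbb S^{d-1}}(1\to q_0)\le\sigma(\mathbb S^{d-1})^{1/q_0}$, cleaned up with Hölder on the sphere.

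There are two small organizational variants worth noting, though neither changes the substance. First, for the radial divergence \eqref{eq_RadDivInf} you factor out the $q$-dependence as ${\bf R}_{\mathbb S^{d-1}}(p\to q;\textup{rad}) = \sigma(\mathbb S^{d-1})^{1/q-1/p'}\,{\bf R}_{\mathbb S^{d-1}}(p\to p';\textup{rad})$ and then compare growth rates, whereas the paper plugs the lower Stempak bound directly into the exact formula and computes; your version makes it transparent that \eqref{eq_RadConv0} and \eqref{eq_RadDivInf} are two faces of the same exact identity, but it does require the \emph{lower} Stempak bound to certify that ${\bf R}_{\mathbb S^{d-1}}(p\to p';\textup{rad})$ decays at most like $\nu^{O(1)}\cdot(\text{const})^{\nu}$ rather than super-exponentially, so be explicit about invoking \eqref{eq_Stempak_below} there. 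Second, for \eqref{eq_GenConv0} you fix the anchor $(1/p_\bullet,1/2)$ and slide the $L^1$ endpoint $1/q_0$, splitting off the case $q<p$ by Hölder, whereas the paper fixes a target $q(p)$ on a canonical dual-Knapp line and Hölders down from there; these are equivalent parametrizations, and both require the finiteness-of-$d$ slack condition ($1/p+1/q-1$ not too small relative to $1/d$), which the paper makes explicit in \eqref{eq_dSuffLarge} and you only gesture at with the phrase ``in the limit.'' Spelling out that constraint would tighten the write-up, but the argument is sound.
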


\begin{figure}[htbp]
  \centering

  \begin{tikzpicture} [scale = 5]
    \fill[black!20] (0, 0) rectangle (1, 1);
    \fill[black!30!green] (0.5, 1) -- (0.5, 0.5) -- (1, 0) -- (1, 1) -- cycle;
    \fill[white!40!red] (0.5, 0) -- (0.5, 0.5) -- (1, 0) -- cycle;
    \draw (0, 1) node[left] {$1$};
    \draw[dotted] (1, 1) -- (0, 0) node[below left] {$0$}; 
    \draw[dotted] (1, 0.5) -- (0, 0.5) node[left] {$\frac12$};
    \draw[dotted] (0.5, 1) -- (0.5, 0) node[below] {$\frac12$};
    \draw[very thick, yellow] (0.5, 0.5) -- (1, 0);
   \draw[thick] (0.5, 0) -- (0.5, 1);      \draw[thick] (0, 1) -- (0.5, 1);  
   \draw[thick] (0, 0) -- (0, 1); 
    \draw[very thick] (0, 0) -- (0.5, 0); \draw[thick, black!50!green] (0.5, 1) -- (1, 1);
      \draw[very thick, black!50!green] (1, 1) -- (1,0); 
      \draw[very thick, black!20!red] (1, 0) -- (0.5,0); 
    \draw[->] (1, 0) -- (1.05, 0) node[right] {$\frac1p$};
    \draw[->] (0, 0) -- (0, 1.05) node[above] {$\frac1q$};
    \draw[dotted] (1, 1) -- (1, 0) node[below] {$1$};
    \fill[black] (0.5, 0.5) circle [radius = 0.01];
    \fill[black] (0.5, 0) circle [radius = 0.01];
    \fill[black] (0.5, 1) circle [radius = 0.01];
    \draw[black] (1,0) circle [radius = 0.015];
        
        \draw[thick, black!60, text = black] (0.75, 0.7) .. controls (0.75, 0.7) .. (0.75, 0.7) node[below] {A};
   \draw[thick, black!60, text = black] (0.7, 0.2) .. controls (0.7, 0.2) .. (0.7, 0.2) node[left] {B};
      \draw[thick, black!60, text = black] (0.25, 0.5) .. controls (0.25, 0.5) .. (0.25, 0.5) node{C};
  \end{tikzpicture}
  
  \caption{Riesz diagram for the restriction problem to $\mathbb S^{d-1}$ as $d\to\infty$.}
  \label{fig_dimfreeRest}
\end{figure}
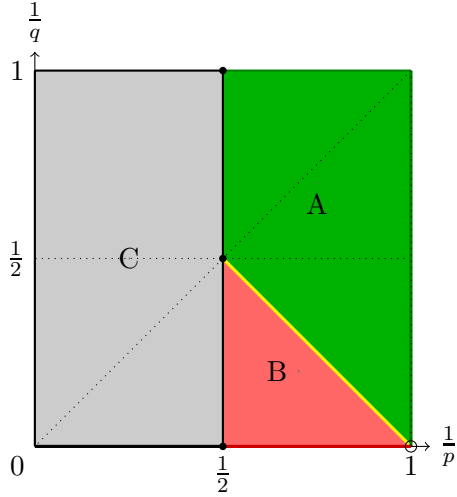

    The proof of Theorem \ref{thm_main} hinges on a dimension-free endpoint Stein--Tomas inequality for radial functions, together with the corresponding estimate for general functions which we prove with an $O(d^{1/2})$ dependence.\footnote{It is worth pointing out that the usual proof of endpoint Stein--Tomas via complex interpolation \cite[\S11.2.1]{MS13} leads to an estimate which grows exponentially with the dimension $d$.} 
    We present them as our second main result. 

    \begin{theorem}\label{thm_ST}
        Given $d\geq 2$, let $p_\bullet=2(d+1)/(d+3)$ denote the endpoint Stein--Tomas exponent. There exists an absolute constant $C<\infty$, independent of $d$, such that
        \begin{equation}\label{eq_WeakStein}
            {\bf R}_{\mathbb S^{d-1}} (p_\bullet\to 2)\leq Cd^{\frac12}.
        \end{equation}
        Moreover, there exist two absolute constants $0<a< A<\infty$, independent of $d$, such that
        \begin{equation}\label{eq_STradialfctns}
      a\le {\bf R}_{\mathbb S^{d-1}} (p_\bullet \to 2;\textup{rad})\leq A
        \end{equation}
    \end{theorem}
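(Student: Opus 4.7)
The plan is to reduce both estimates in Theorem \ref{thm_ST} to sharp weighted $L^{p_\bullet'}$ norms of the Bessel function $J_{(d-2)/2}$ and then invoke the two-sided refined Stempak estimate that is the technical engine of the paper. For the radial bound \eqref{eq_STradialfctns}, recall that if $f=f_0(|\cdot|)$ then $\widehat f$ is radial with profile
\[F(r)=2\pi r^{-(d-2)/2}\int_0^\infty f_0(s)J_{(d-2)/2}(2\pi rs)s^{d/2}\,ds,\]
so $\|\widehat f\|_{L^2(\mathbb S^{d-1})}=|\mathbb S^{d-1}|^{1/2}|F(1)|$. Splitting the weight as $s^{d/2}=s^{(d-1)/p_\bullet}\cdot s^{d/2-(d-1)/p_\bullet}$ and applying Hölder with exponents $p_\bullet,p_\bullet'$ to the Bessel integral for $F(1)$ reduces the upper bound to showing that
\[J_d:=\biggl(\int_0^\infty |J_{(d-2)/2}(2\pi s)|^{p_\bullet'}s^{-(d-3)/(d-1)}\,ds\biggr)^{1/p_\bullet'}\lesssim d^{-1/2},\]
where I have used $1/p_\bullet-1/2=1/(d+1)$ together with Stirling's formula $|\mathbb S^{d-1}|^{1/(d+1)}\sim d^{-1/2}$. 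This is exactly the weighted $L^{p_\bullet'}$ Bessel bound that the upper half of the refined Stempak estimate delivers. The matching lower bound $a>0$ comes from testing \eqref{eq_STradialfctns} against a radial bump profile concentrated at the Bessel transition radius $s\sim\nu:=(d-2)/2$, for which Hölder is essentially saturated; the required sharp lower bound on $J_d$ is the lower half of the same two-sided Stempak estimate.

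For \eqref{eq_WeakStein}, I would use the $TT^*$ identity
\[{\bf R}_{\mathbb S^{d-1}}(p_\bullet\to 2)^2=\|f\mapsto f\ast\widehat\sigma\|_{L^{p_\bullet}\to L^{p_\bullet'}},\qquad \widehat\sigma(\xi)=2\pi|\xi|^{-(d-2)/2}J_{(d-2)/2}(2\pi|\xi|),\]
thereby bypassing Stein's analytic interpolation whose constants grow exponentially in $d$. Following Tomas, I would decompose $\widehat\sigma=\sum_{j\ge 0}K_j$ with $K_j$ localized to $|\xi|\sim 2^j$, and estimate each convolution operator $f\mapsto f\ast K_j$ by real interpolation between its $L^1\to L^\infty$ bound (pointwise from the refined Stempak control of $J_\nu$) and its $L^2\to L^2$ bound (governed by $\|\widehat{K_j}\|_\infty$, which reduces to a surface measure of a bump-smoothed sphere). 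At the endpoint $p=p_\bullet$ the two bounds meet; converting the summation back into one-dimensional weighted $L^{p_\bullet'}$ integrals of $J_\nu$ and invoking the two-sided Stempak refinement causes the exponential-in-$d$ volume factors to cancel, leaving only an $O(d^{1/2})$ loss attributable to the width $\sim\sqrt\nu$ of the Bessel transition zone (where no further rescaling trick is available to concentrate everything at a single radius, as in the radial reduction).

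The main obstacle is the precise bookkeeping of dimensional constants across the dyadic sum in the non-radial case. In the radial setting, scaling positions the whole calculation at a single radius and yields the clean cancellation between $|\mathbb S^{d-1}|^{1/(d+1)}$ and $J_d$ described above; in the non-radial setting the Bessel transition zone contributes in full, and proving that it produces exactly $O(d^{1/2})$ (and not, say, $O(d)$) requires the lower half of the refined Stempak estimate to rule out the cancellations that would otherwise inflate the $L^{p_\bullet'}$ norm of the relevant kernel pieces. This coordinated use of matching upper and lower Bessel bounds is the conceptual heart of the argument, and it is precisely the two-sidedness of the refined Stempak estimate that makes it possible.
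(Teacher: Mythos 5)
Your reduction of the radial estimate \eqref{eq_STradialfctns} is essentially the paper's: write $\widehat f(\xi)$ for radial $f$ as a Bessel transform, observe that $\widehat f$ is constant on $\mathbb S^{d-1}$, apply H\"older, and invoke the two-sided Stempak estimate. One small inaccuracy: the paper does not need to test against a bump concentrated near the Bessel transition radius. H\"older is \emph{exactly} saturated by $|f(r)|^{p_\bullet}\propto |J_\nu(2\pi r)\,r^{1-d/2}|^{p_\bullet'}$, which gives the closed-form identity \eqref{eq_Rrad} for ${\bf R}_{\mathbb S^{d-1}}(p_\bullet\to 2;\textup{rad})$; both the upper and lower bounds then fall out directly from the two sides of Proposition~\ref{prop_Stempak}. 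Your description arrives at the same place, but the exactness of the identity is cleaner than ``essentially saturated.''

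For the non-radial estimate \eqref{eq_WeakStein} there is a genuine gap. You propose a $TT^\ast$ reduction followed by a Tomas-type dyadic decomposition $\widehat\sigma=\sum_j K_j$ with real interpolation on each piece. This is the classical argument, but it famously fails at the endpoint $p=p_\bullet$: the dyadic sum only converges geometrically when $p<p_\bullet$, and the gain per scale vanishes exactly at $p=p_\bullet$, so the sum diverges (logarithmically). That is why the standard endpoint proof uses Stein's analytic interpolation---which, as the paper notes, gives constants that grow exponentially in $d$. Your proposed fix, that the ``lower half of the refined Stempak estimate'' cancels ``exponential-in-$d$ volume factors'' and rescues the sum, does not address the divergence and, more fundamentally, a \emph{lower} bound on a Bessel $L^p$ norm cannot help prove an \emph{upper} bound on the restriction constant; the lower half of Proposition~\ref{prop_Stempak} is never used in the paper's proof of \eqref{eq_WeakStein}. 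What the paper actually does is quite different: it writes $\xi=(\xi',\tau)\in\R^{d-1}\times\R$, defines a propagator $U(\tau)$ acting in the $\xi'$-slices, proves dimension-tracked $L^1\to L^\infty$ and $L^2\to L^2$ bounds on $U(\tau)$ (the latter via the explicit kernel $K(x',\tau)$, which is bounded by $2$), interpolates to an $L^{p_\bullet}\to L^{p_\bullet'}$ bound with decay $\langle\tau\rangle^{-(d-1)/(d+1)}$, and finally applies the sharp one-dimensional Hardy--Littlewood--Sobolev inequality in the transverse variable $\tau$. The HLS step is what handles the endpoint; the sharp HLS constant from Lieb's theorem grows like $d$, which after the $TT^\ast$ square root gives the $O(d^{1/2})$. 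Your outline is missing the mechanism that makes the endpoint work.
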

       Constant functions are conjectured to be global maximizers for  the endpoint adjoint Stein--Tomas inequality on $\mathbb S^{d-1}$ for every $d\geq 2$; see \cite[\S3.1]{FOS17}. Since ${\bf R}^\ast_{\mathbb S^{d-1}} (2\to p_\bullet')={\bf R}_{\mathbb S^{d-1}} (p_\bullet\to 2)$, this conjecture can be reformulated as
        \begin{equation}\label{eq_STConj}
           \frac{ \|\widehat\sigma\|_{L^{p_\bullet'}(\R^{d})}}{ \|{\bf 1}\|_{L^2(\mathbb S^{d-1})}} \stackrel{?}{=}{\bf R}_{\mathbb S^{d-1}} (p_\bullet\to 2) ,
        \end{equation}
        where $\sigma$ denotes the usual surface measure on $\mathbb S^{d-1}$, and its Fourier transform $\widehat{\sigma}$ is defined in \eqref{eq_sigmahat} below.
        So far, identity \eqref{eq_STConj} has only been confirmed on $\mathbb S^2$ by Foschi \cite{Fo15}.  Extensive evidence has been accumulated for $\mathbb S^1$; see the recent survey \cite[\S2]{NOST23} and the very recent works \cite{Be23,CG24}.
        In low dimensions $2\leq d\leq 60$, constant functions are known to be local maximizers \cite{GN22}. It is easy to verify that \eqref{eq_STConj} holds if ${\bf R}_{\mathbb S^{d-1}} (p_\bullet\to 2)$ is replaced by ${\bf R}_{\mathbb S^{d-1}} (p_\bullet \to 2;\textup{rad})$; see \eqref{eq_Holder}--\eqref{eq_Rrad} below.   Hence,   if conjecture \eqref{eq_STConj} does hold, then in light of \eqref{eq_STradialfctns} from Theorem \ref{thm_ST} we would have ${\bf R}_{\mathbb S^{d-1}} (p_\bullet \to 2)={\bf R}_{\mathbb S^{d-1}} (p_\bullet \to 2;\textup{rad})\in(a,A)$, which is a 
        dimension-free endpoint Stein--Tomas inequality for general functions. Thus estimates \eqref{eq_WeakStein}--\eqref{eq_STradialfctns} can  be regarded as partial progress  towards the validity of \eqref{eq_STConj} in general dimensions.

\medskip

The proof of Theorem \ref{thm_ST} naturally splits into two parts. Inequality \eqref{eq_WeakStein} follows from an adaptation of the fractional integration method in a direction transverse to the hypersurface \cite[\S11.2.2]{MS13},  combined with the one-dimensional case of Lieb's sharp Hardy--Littlewood--Sobolev inequality \cite{Li83}.
The proof of inequality \eqref{eq_STradialfctns} relies on a two-sided uniform version of the asymptotic  $L^p$-estimate of Bessel functions from Stempak \cite[Eq.\@ (6)]{St00} which
will also be useful to establish \eqref{eq_RadConv0}--\eqref{eq_RadDivInf}. We consider it of independent interest and present it as a separate result. In what follows, by $A\lesssim B$ we mean that $A\le C B$ for an absolute constant $C$, and  analogously for $A\gtrsim B$.

\begin{proposition}\label{prop_Stempak} 
Given  $\nu\ge 2$
and $1\leq p\leq\infty$, let  $-\nu-\frac1p<\alpha < \frac12 - \frac1p$. Then the following uniform upper bound holds: 
\begin{equation}\label{eq_Stempak}
\begin{split}
&\left( \int_0^{\infty} \big| J_{\nu}(r)\,r^{\alpha}\big|^p\,\d r\right)^{\frac1p} \\
&\lesssim{2^{-\alpha}}\left\{ 
\begin{array}{ll}
\left(|p(\nu+\alpha)+1|^{-1}+(4-p)^{-1}+|p(\alpha - \frac12)+1|^{-1}\right)^{\frac1p} \nu^{\alpha - \frac12 + \frac1p}, & 1 \leq p < 4;\\
\left(|4(\nu+\alpha)+1|^{-1}+\log \nu+|4(\alpha - \frac12)+1|^{-1}\right)^{\frac14}\nu^{\alpha - \frac14}, & p=4;\\
\left(|p(\nu+\alpha)+1|^{-1}+(p-4)^{-1}+|p(\alpha - \frac12)+1|^{-1}\right)^{\frac1p}\nu^{\alpha - \frac13 + \frac{1}{3p}}, & 4 < p < \infty.
\end{array}
\right.
\end{split}
\end{equation}
Furthermore, for $p=\infty$ and $-\nu<\alpha<\frac12$, we have
\begin{equation}\label{eq_Stempak_inf}
\sup_{r\ge 0}|J_{\nu}(r)\,r^{\alpha}|\lesssim 2^{-\alpha} \nu^{\alpha - \frac13}.
\end{equation}
Moreover, under the same assumptions on $\nu,p,\alpha$ as before, the following uniform lower bound holds for every $1\le p< \infty:$
\begin{equation}\label{eq_Stempak_below}
 \left( \int_0^{\infty} \big| J_{\nu}(r)\,r^{\alpha}\big|^p\,\d r\right)^{\frac1p} \gtrsim 30^\alpha 
|p(\alpha - \tfrac12)+1|^{-\frac1p} \nu^{\alpha - \frac12 + \frac1p},
\end{equation}
while, for $p=\infty$ and $-\nu<\alpha<\frac12$, we have
\begin{equation}\label{eq_Stempak_below_inf}
 \sup_{r\ge 0}|J_{\nu}(r)\,r^{\alpha}| \gtrsim 30^\alpha 
 \nu^{\alpha - \frac12}.
\end{equation}
\end{proposition}

{\it A priori} stronger maximal and variational restriction estimates are known to follow from restriction inequalities like \eqref{eq_restr}, see \cite{Ko19}, and so the question of dimension-free bounds naturally arises in that setting as well. However, the argument in \cite{Ko19} leads to   estimates for  maximal and variational restriction operators which are qualitatively similar to Theorems \ref{thm_main}--\ref{thm_ST} in terms of the dimension. Because of this, and in order to emphasize the role of the restriction operator and of conjecture \eqref{eq_STConj}, we decided to phrase our results for the restriction operator only.
\medskip

The fact that the operator norms in \eqref{eq_GenConv0}--\eqref{eq_RadConv0} in Theorem \ref{thm_main} converge to $0$ when $d\to \infty$ may seem puzzling at first. However, a similar phenomenon occurs for the sharp Hausdorff--Young inequality,
\[
 \|\widehat f\|_{L^{p'}(\mathbb R^{d})} \leq {p^{\frac d{2p}}}{(p')^{-\frac d{2p'}}} \|f\|_{L^p(\R^{d})},\qquad 1\le p\le 2,
\]
proved by Babenko \cite{Ba61} whenever $p'$ is an even integer and by Beckner \cite{Beckner1} for general $p\in[1,2]$. In the latter range, if $p\notin\{1,2\},$ then $p^{1/p}(p')^{-1/p'}<1,$ and so  the sharp Babenko--Beckner constant, $(p^{1/p}(p')^{-1/p'})^{d/2}$, also tends to $0$ as $d\to \infty.$
The Hausdorff--Young inequality has been identified as a toy model for certain more elaborate phenomena involving the adjoint restriction, or {\it extension}, operator; see \cite{BBC09, BBCH09} and \cite[\S1.2]{COSS19}.
\medskip

        Questions about dimension-free estimates for various classes of operators constitute an important research theme in harmonic analysis. Even though the topic has been investigated for several decades,  many questions  remain open. The main classes of operators which have been studied in this context include Riesz transforms and Hardy--Littlewood maximal operators. The first instances of dimension-free estimates for these classes are due to E. M. Stein  \cite{SteinRiesz} together with J-O. Str\"omberg \cite{StStr}. The literature on this topic is vast. We refer the interested reader to  \cite[p. 487]{Hej1} for results related to Riesz transforms, and to \cite{BMSW4} and the references therein for results related to Hardy--Littlewood maximal functions.

\subsection{Notation and preliminaries}
Given two nonnegative quantities
$A, B$, we continue to write $A \lesssim  B$ if
there exists an absolute constant $C>0$ such that $A\le CB$. We emphasize that,  in this notation, {\it $C$ is independent of the dimension $d$}. By $A \simeq B$, we mean that both $A\lesssim B$ and $B\lesssim A$ hold. 
Given $\Delta>0$, we also write $A \lesssim^{\Delta} B$ if there exists  an absolute constant $C>0$ such that
$A\le C^{\Delta}\, B.$ 
\medskip

The inverse of the Fourier transform $f\mapsto \widehat f$ given by \eqref{eq_FT} is 
\[f^\vee(x)=\int_{\R^d} f(\xi) e^{2\pi i \xi\cdot x}\,\d \xi.\]
It holds that $(f\ast g)^\wedge=\widehat f\cdot\widehat g$ and  $f=(\widehat f)^\vee$. Moreover, Plancherel's identity reads as follows:  $\|\widehat f\|_2=\|f^\vee\|_2=\|f\|_2$. 
\medskip

The usual surface measure $\sigma$ on the unit sphere $\mathbb S^{d-1}$ satisfies
\begin{equation}\label{eq_sphere}
   \sigma(\mathbb S^{d-1})=\frac{2\pi^{\frac d2}}{\Gamma(\frac d2)}=\frac{2\pi^{\nu+1}}{\nu!},
\end{equation}
where $\nu:=d/2-1$ and we abbreviate the Gamma function as $x!:=\Gamma(x+1)$. Of course, $\sigma(\mathbb S^{d-1})\to 0$ as $d\to\infty.$ The following effective version of Stirling's formula will be useful in the upcoming analysis.
\begin{lemma}[{  \cite[Eq.\ 6.1.38]{AS}}]
\label{lem_Stirling}
    For $x>0$,
   \[x!=\sqrt{2\pi}x^{x+\frac12}\exp(-x+\theta(x)),\]
where the function $\theta$ satisfies $0<\theta(x)<(12x)^{-1}.$
\end{lemma}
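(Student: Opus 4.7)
Define the error function $\theta$ by $\theta(x) := \log(x!) - (x+\tfrac12)\log x + x - \log\sqrt{2\pi}$, so the task reduces to proving $1/(12x+1) < \theta(x) < 1/(12x)$ for every $x\geq 1$. The plan is to follow the telescoping-series strategy of Robbins, which is built on the functional equation $x! = x\cdot(x-1)!$ rather than on any deep property of the Gamma function.

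From that functional equation one computes, for every $x>0$, that $\theta(x) - \theta(x+1) = (x+\tfrac12)\log(1+1/x) - 1$. Applying the identity $\log\frac{1+y}{1-y} = 2\sum_{k\geq 0}\frac{y^{2k+1}}{2k+1}$ at $y = 1/(2x+1)$ (for which $(1+y)/(1-y) = 1 + 1/x$) rewrites this as a positive series:
\[
\theta(x) - \theta(x+1) = \sum_{k=1}^{\infty} \frac{1}{(2k+1)(2x+1)^{2k}}.
\]
Since $\theta(x) \to 0$ as $x\to\infty$ by the classical qualitative Stirling formula, telescoping yields the absolutely convergent representation $\theta(x) = \sum_{m=0}^{\infty}\bigl[\theta(x+m) - \theta(x+m+1)\bigr]$, so any bound on each summand transfers at once to a bound on $\theta(x)$.

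For the upper bound I would use $1/(2k+1) \leq 1/3$ for $k\geq 1$ together with a geometric sum:
\[
\theta(x)-\theta(x+1) < \frac{1}{3}\cdot\frac{(2x+1)^{-2}}{1-(2x+1)^{-2}} = \frac{1}{12x(x+1)} = \frac{1}{12x}-\frac{1}{12(x+1)},
\]
and telescoping immediately gives $\theta(x) < 1/(12x)$. For the lower bound I would aim for the sharper increment inequality
\[
\theta(x)-\theta(x+1) > \frac{1}{12x+1}-\frac{1}{12(x+1)+1} = \frac{12}{(12x+1)(12x+13)}.
\]
Retaining only the first two terms of the series reduces this to the rational inequality $\frac{1}{3(2x+1)^{2}}+\frac{1}{5(2x+1)^{4}} > \frac{12}{(12x+1)(12x+13)}$, which on clearing denominators becomes a polynomial inequality in $x$ verifiable for every $x>0$; all higher-order terms of the series only help. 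Telescoping once more gives $\theta(x) > 1/(12x+1)$.

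The main obstacle is precisely the sharpness of the lower bound: the constant $12$ appearing in $12x+1$ is forced by matching the leading $1/(12x)$ asymptotic of $\theta(x)$, so the upper and lower bounds on the increment $\theta(x) - \theta(x+1)$ agree to leading order. Consequently no naive estimate on $\theta$ itself can recover the correct constant; Robbins' key insight is to compare \emph{increments} of $\theta$ with increments of $1/(12x+1)$, converting a sharp analytic statement into an elementary but tight polynomial inequality.
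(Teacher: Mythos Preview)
The paper does not supply its own proof of this lemma; it simply states the result with attribution to Robbins \cite{Ro55}. Your proposal reproduces Robbins' original telescoping argument and is correct in substance, so it is fully consistent with what the paper does.

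One minor slip worth flagging: the two-term truncated inequality
\[
\frac{1}{3(2x+1)^{2}}+\frac{1}{5(2x+1)^{4}} > \frac{12}{(12x+1)(12x+13)}
\]
does \emph{not} hold for every $x>0$ as you claim (at $x=0$ the left side is $8/15$ while the right side is $12/13$); it holds only for $x$ bounded away from $0$. Since the lemma is stated for $x\ge 1$ and the telescoping only ever evaluates the increment at points $x+m\ge 1$, this does not damage the argument---but you should restrict the polynomial verification to $x\ge 1$ rather than $x>0$.
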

\noindent The Fourier transform of surface measure $\sigma$ on $\mathbb S^{d-1}$ is given by
\begin{equation}\label{eq_sigmahat}
    \widehat\sigma(\xi):=\int_{\mathbb S^{d-1}} e^{-2\pi i \omega\cdot\xi}\,\d\sigma(\omega)=2\pi |\xi|^{-\nu} J_{\nu}(2\pi |\xi|),
\end{equation}
where, as before,  $\nu=d/2-1$, and 
\begin{equation}\label{eq_Besseldef}
    J_\nu(r):=\sum_{n=0}^\infty \frac{(-1)^n}{n!\Gamma(n+\nu+1)}\left(\frac r2\right)^{2n+\nu}
\end{equation} defines the Bessel function for $r>0$;
see \cite[p.~347]{St93}.

\subsection{Outline of the paper}
In \S\ref{sec_Bessel}, we collect some known Bessel facts and use them to prove Proposition \ref{prop_Stempak}.
In \S\ref{sec_PT2} and \S\ref{sec_PT1}, we prove Theorems \ref{thm_ST}
 and \ref{thm_main}, respectively.

\section{Bessel functions and the proof of Proposition \ref{prop_Stempak}}\label{sec_Bessel}
We start with some well-known pointwise uniform estimates for the Bessel function.
\begin{lemma}\label{lem_Besselnew}
The following uniform estimates hold:    \begin{equation}\label{eq_Bessel2}
 |J_\nu(r)|\leq\frac{r^\nu}{2^\nu\nu!} \qquad(\nu\geq 0,\,\,\, r>0)
  \end{equation}
 \begin{equation}\label{eq_Bessel3}
        |J_\nu(r)|\leq r^{-\frac12}\qquad(\nu\geq \frac12,\,\,\, r>2\nu)
    \end{equation}
    \begin{equation}\label{eq_Bessel4}
        |J_\nu(r)|\lesssim \nu^{-\frac14}(|r-\nu|+\nu^{\frac13})^{-\frac14} \qquad(\nu\geq 2,\,\,\, \frac{\nu}2<r<2\nu)
    \end{equation}
 \begin{equation}\label{eq_Bessel1}
 |J_\nu(r)|\lesssim \nu^{\frac16}r^{-\frac12}\qquad(\nu\geq 1,\,\,\, r>0)
 \end{equation}
\end{lemma}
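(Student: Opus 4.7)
The plan is to treat each of the four pointwise estimates in turn, using the integral or asymptotic representation of $J_\nu$ best suited to the regime in question. Collectively these bounds are classical and can be found (often in sharper form) in Watson's treatise on Bessel functions; closely related statements appear in \cite{St00} and in \cite[\S VIII.5]{St93}. None is new, but assembling them into the precise uniform form stated above is what the argument will accomplish.

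For \eqref{eq_Bessel2}, I would invoke Poisson's integral formula, which for $\nu\ge 0$ reads
$$
J_\nu(r) \;=\; \frac{(r/2)^\nu}{\sqrt\pi\,\Gamma(\nu+\tfrac12)} \int_{-1}^{1} \cos(rt)\,(1-t^2)^{\nu-\frac12}\,\d t.
$$
Bounding the cosine by $1$ and evaluating the Beta integral $\int_{-1}^1(1-t^2)^{\nu-\frac12}\,\d t = \sqrt\pi\,\Gamma(\nu+\tfrac12)/\Gamma(\nu+1)$ yields \eqref{eq_Bessel2} at once. For \eqref{eq_Bessel3}, I would appeal to the classical Debye asymptotic in the oscillatory regime $r\ge\nu$, whose leading term is $\sqrt{2/(\pi\sqrt{r^2-\nu^2})}\,\cos(\Phi_\nu(r))$ with an effective error controlled by the same quantity. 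When $r>2\nu$ and $\nu\ge\tfrac12$, one has $\sqrt{r^2-\nu^2}\ge \tfrac{\sqrt{3}}{2}\,r$, so the leading amplitude is bounded by an absolute constant times $r^{-1/2}$; refining the constants via the sharp monotonicity-based version due to Landau produces the unit constant stated.

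The most delicate piece is \eqref{eq_Bessel4}, which controls $J_\nu$ uniformly across the transition zone around the turning point $r=\nu$, where $J_\nu$ interpolates between exponentially decaying and oscillatory behavior at the scale $\nu^{1/3}$. My plan here is to use the uniform Airy-type asymptotic expansion
$$
J_\nu(\nu z) \;=\; \Bigl(\frac{4\zeta(z)}{1-z^2}\Bigr)^{\!1/4}\!\left(\frac{\operatorname{Ai}(-\nu^{2/3}\zeta(z))}{\nu^{1/3}}+O(\nu^{-1})\right),
$$
where $\zeta=\zeta(z)$ is Olver's phase variable. Combining the pointwise Airy bound $|\operatorname{Ai}(-x)|\lesssim (1+|x|)^{-1/4}$ with the fact that $\nu^{2/3}|\zeta(z)|\simeq \nu^{-1/3}|r-\nu|$ for $\nu/2<r<2\nu$, together with the boundedness of the Jacobian factor $(4\zeta/(1-z^2))^{1/4}$ on that range, yields \eqref{eq_Bessel4}. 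This step is the main obstacle, since it requires quantitative control of Olver's expansion uniformly through the turning point and careful handling of the remainder.

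Finally, \eqref{eq_Bessel1} follows by combining the three preceding estimates. Indeed, \eqref{eq_Bessel3} trivially gives \eqref{eq_Bessel1} when $r>2\nu$; \eqref{eq_Bessel4} gives it when $\nu/2<r<2\nu$ and $\nu\ge 2$, because there $\nu^{-1/4}(|r-\nu|+\nu^{1/3})^{-1/4}\lesssim \nu^{-1/3}\lesssim \nu^{1/6}r^{-1/2}$; for $r\le\nu/2$, the power-type bound \eqref{eq_Bessel2} together with Stirling's formula (Lemma \ref{lem_Stirling}) produces a factor of order $(e/4)^\nu/\sqrt{\nu}$, which is easily dominated by $\nu^{1/6}r^{-1/2}$ in that regime; and the leftover parameter range $\nu\in[1,2)$ is handled by the universal bound $|J_\nu|\le 1$, absorbed into the implicit constant.
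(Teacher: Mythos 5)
Your proposal is correct, but it takes a genuinely different route from the paper. The paper's proof of Lemma~\ref{lem_Besselnew} is purely citation-based: \eqref{eq_Bessel2}--\eqref{eq_Bessel3} are quoted from \cite[Lemmas 6 and 8]{COSS19}, \eqref{eq_Bessel4} from \cite[Eq.\@ (4)]{St00} and the Barcel\'o--C\'ordoba estimates \cite[p.~661]{BC89}, and \eqref{eq_Bessel1} from Olenko's sharp bound \cite[Theorem 2.1]{Ol06}. You instead sketch actual derivations: \eqref{eq_Bessel2} via Poisson's integral (the same mechanism as the cited source); \eqref{eq_Bessel3} via the Debye/Nicholson monotonicity bound $J_\nu^2+Y_\nu^2\le 2/(\pi\sqrt{r^2-\nu^2})$ for $r>\nu\ge\frac12$, which on $r>2\nu$ gives amplitude $\sqrt{4/(\pi\sqrt3)}\,r^{-1/2}<r^{-1/2}$ — a valid and self-contained alternative to the cited reference; \eqref{eq_Bessel4} via Olver's uniform Airy-type asymptotics, which is the most systematic route but requires more careful bookkeeping than your sketch conveys (the truncation error in Olver's expansion is not a bare $O(\nu^{-1})$ but is itself controlled by $\operatorname{Ai}$ and $\operatorname{Ai}'$ envelopes; fortunately, on $\nu/2<r<2\nu$ the target bound is $\gtrsim\nu^{-1/2}$, so any error of size $O(\nu^{-1})$ or better is absorbed, and the claim $|\zeta(z)|\simeq|z-1|$ on $z\in[\tfrac12,2]$ is correct by compactness); and, most notably, you \emph{derive} \eqref{eq_Bessel1} from the first three bounds together with the elementary fact $|J_\nu|\le1$ for $\nu\ge0$, rather than citing it. This last step is a nice observation that makes the lemma almost self-contained; your regime-by-regime check is sound (the only point worth spelling out is that for $\nu\in[1,2)$ and $r\le2\nu<4$ one has $\nu^{1/6}r^{-1/2}\ge\frac12$, so $|J_\nu|\le1$ suffices, while $r>2\nu$ is covered by \eqref{eq_Bessel3}). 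In short, the paper trades generality for brevity by citing, whereas your route is longer but more transparent and more self-contained, with the Olver step being the one place where a fully rigorous write-up would require real additional work.
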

\begin{proof}
Estimates \eqref{eq_Bessel2} and \eqref{eq_Bessel3} are contained in \cite[Lemmas 6 and 8]{COSS19}, respectively.
Estimate \eqref{eq_Bessel4} appears in \cite[Eq.\@ (4)]{St00} and follows from the estimates in \cite[p.~661]{BC89} via a straightforward case distinction.
Estimate \eqref{eq_Bessel1} follows from the more precise form in \cite[Theorem 2.1]{Ol06}. 
\end{proof}

The pointwise difference between $J_\nu(r)$ and the first term in its asymptotic expansion, $(2/(\pi r))^{1/2}\cos(r-(2\nu+1)\pi/4)$, is  of order $O(\nu^2 r^{-3/2})$; see 
\cite[Theorem 4]{Kr14} which comes with a sharp error term. Since we are interested in bounds that are uniform in $\nu$, the following result, which is a slight modification of  Krasikov's sharper approximation for the Bessel function from \cite[Theorem 5]{Kr14}, will be useful. Given $\nu>1/2$, let $\mu:=\nu^2-1/4$ and $\omega_\nu:=(2\nu+1)\pi/4$.
\begin{lemma}\label{lem_Krasikov}
     For every $\nu>1/2$ and $r>2\nu$, we have
 \begin{equation}\label{eq_Krasikov}
     J_\nu(r)=\sqrt{\frac{2}{\pi}}\frac{\cos(\mathcal B(r)-\omega_{\nu})}{(r^2-\mu)^{\frac14}}+g(r,\nu),
     \end{equation}
     where
     \begin{equation}\label{eq_Bg}
         \mathcal B(r):=(r^2-\mu)^{\frac12}+\sqrt\mu \arcsin \frac{\sqrt\mu}{r},\qquad\textrm{and}\qquad |g(r,\nu)|\leq  (r^2-\mu)^{-\frac34}.
     \end{equation}
\end{lemma}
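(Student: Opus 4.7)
The plan is to derive the lemma directly from Theorem~5 of \cite{Kr14}, which already supplies an identity of exactly the form \eqref{eq_Krasikov}, with the same phase function $\mathcal B(r)$ and offset $\omega_\nu$, and an explicit error term. The \emph{slight modification} is essentially cosmetic: one passes from Krasikov's precise range of validity to the cleaner hypothesis $r > 2\nu$, and then absorbs the resulting error into the simple bound $(r^2-\mu)^{-3/4}$ demanded by \eqref{eq_Bg}.

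First, I would record Krasikov's expansion in the regime where it applies. Qualitatively, his theorem produces $J_\nu(r) = \sqrt{2/\pi}\,(r^2-\mu)^{-1/4}\cos(\mathcal B(r)-\omega_\nu) + \widetilde g(r,\nu)$, where $\widetilde g(r,\nu)$ is a finite linear combination of terms of the form $c_k\,\mu^k\,(r^2-\mu)^{-3/4-k}$ with absolute constants $c_k$ arising from the Liouville--Green expansion of the Bessel equation $y'' + (1 - \mu/r^2)\,y = 0$ satisfied by $r^{1/2} J_\nu(r)$.

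Second, I would exploit the elementary inequality $r^2-\mu \ge 3\nu^2 + \tfrac14 \ge 3\mu + 1$ that follows from $r > 2\nu$ and $\mu = \nu^2 - \tfrac14$; in particular $\mu/(r^2-\mu) \le \tfrac13$. This places $r > 2\nu$ comfortably inside Krasikov's range and allows each higher-order correction $c_k\,\mu^k\,(r^2-\mu)^{-3/4-k}$ to be dominated by $c_k\,3^{-k}\,(r^2-\mu)^{-3/4}$. Summing the resulting geometric series, one bounds $|\widetilde g(r,\nu)|$ by an absolute multiple of $(r^2-\mu)^{-3/4}$, as required modulo the value of the multiplicative constant.

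The main obstacle will be showing that this absolute multiple can in fact be taken to be $\le 1$, matching \eqref{eq_Bg} exactly. At this stage the proof reduces to explicit numerical bookkeeping of the constants $c_k$ of \cite{Kr14} together with the geometric-series gain $3^{-k}$ from the previous paragraph. I expect enough slack to spare under the strengthened hypothesis $r > 2\nu$ that no further refinement of Krasikov's expansion is needed, but verifying this cleanly does require tracking Krasikov's constants with some care; this is where a conceptual proof sketch unavoidably becomes an arithmetic check.
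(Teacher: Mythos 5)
Your proposal follows essentially the same route as the paper: invoke Krasikov's approximation (the relevant source is \cite[Eq.\@ (9)]{Kr14}, valid for all $r>\sqrt\mu$), and then use the stronger hypothesis $r>2\nu$ to absorb the error into $(r^2-\mu)^{-3/4}$. One clarification is in order: Krasikov's remainder is not a finite Liouville--Green sum of terms $c_k\mu^k(r^2-\mu)^{-3/4-k}$ but a \emph{single} explicit term $\theta\cdot 13\mu/\bigl(12\sqrt{2}\pi(r^2-\mu)^{7/4}\bigr)$ with $|\theta|\le 1$, so there is no geometric series to sum and no bookkeeping of constants $c_k$; since $r>2\nu$ gives $\mu<r^2/4\le (r^2-\mu)/2$, the final check that the multiplicative constant is at most $1$ collapses to the trivial inequality $13/(24\sqrt{2}\pi)<1$, not the delicate arithmetic you anticipate.
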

\begin{proof}
    From \cite[Eq.\@ (9)]{Kr14}, we have that identity \eqref{eq_Krasikov} holds for every $r>\sqrt\mu$, as long as the function $\mathcal B$ is given by \eqref{eq_Bg}, and \[g(r,\nu)=\theta\frac{13\mu}{12\sqrt{2\pi}(r^2-\mu)^{\frac74}};\]
    here, $\theta$ is simply a real number whose absolute value does not exceed one. If $r>2\nu$, then we can further estimate:
    \[|g(r,\nu)|\leq\frac{13}{12{\sqrt{2\pi}}}\frac{\mu}{(r^2-\mu)^{\frac74}}\leq\frac{13}{12{\sqrt{2\pi}}}\frac{r^2/4}{(r^2-\mu)^{\frac74}}\leq\frac{13}{24{\sqrt{2\pi}}}\frac{r^2-\mu}{(r^2-\mu)^{\frac74}}\leq (r^2-\mu)^{-\frac34}.\qedhere\]
\end{proof}

We are now ready to prove Proposition \ref{prop_Stempak}.

\begin{proof}[Proof of Proposition \ref{prop_Stempak}]
Let $\nu\geq 2$, so that all estimates from Lemma \ref{lem_Besselnew} apply.  We consider first the case $1<p<\infty$, and decompose
\begin{align*}
\int_0^{\infty} \big| J_{\nu}(r)\,r^{\alpha}\big|^p\,\d r&= \int_0^{\frac{\nu}2} \big| J_{\nu}(r)\,r^{\alpha}\big|^p\,\d r+\int_{\frac{\nu}2}^{2\nu} \big| J_{\nu}(r)\,r^{\alpha}\big|^p\,\d r+\int_{2\nu}^{\infty} \big| J_{\nu}(r)\,r^{\alpha}\big|^p\,\d r\\
&:=I_1+I_2+I_3.
\end{align*}

We start with the proof of the upper bounds in \eqref{eq_Stempak}, estimating the integrals $I_j, j\in\{1,2,3\}$, from above. We  first treat $I_1$ and $I_3.$ In order to bound $I_1$, we use estimate \eqref{eq_Bessel2} followed by Stirling's formula as in Lemma \ref{lem_Stirling}, yielding
\begin{equation*}
\begin{split}
I_1=& \int_0^{\frac{\nu}2} \big| J_{\nu}(r)\,r^{\alpha}\big|^p\,\d r\le (2^\nu \nu!)^{-p}\int_0^{\frac{\nu}2} r^{p(\nu+\alpha)}\,\d r \lesssim 2^{-\nu p}  e^{\nu p} \nu^{-\nu p}\frac{(\frac{\nu}2)^{p(\nu+\alpha)+1}}{p(\nu+\alpha)+1} \\&\leq \frac{2^{-(2\nu+\alpha)p}e^{\nu p}\nu^{\alpha p +1}}{p(\nu+\alpha)+1}
= 2^{-\alpha p}\frac{(\frac e4)^{\nu p} \nu^{\alpha p +1}}{p(\nu+\alpha)+1}.
\end{split}
\end{equation*}
It follows that
\begin{equation}
\label{eq_I1}
I_1\lesssim \frac{2^{-\alpha p}}{|p(\nu+\alpha)+1|}\times \left\{ 
\begin{array}{ll}
 \nu^{p(\alpha - \frac12) + 1}, & 1 \leq p < 4;\\
\nu^{4\alpha - 1}, & p=4;\\
\nu^{p(\alpha - \frac13) + \frac{1}{3}}, & 4 < p \leq \infty.
\end{array}
\right.
\end{equation}
In order to bound $I_3$,   we use estimate \eqref{eq_Bessel3} and obtain
\begin{equation*}
    I_3\leq \int_{2\nu}^{\infty} r^{p(\alpha -\frac12)}\,\d r=\frac{(2\nu)^{p(\alpha -\frac12)+1}}{-p(\alpha -\frac12)-1}\leq \frac{\nu^{p(\alpha - \frac12) + 1}}{|p(\alpha - \tfrac12) + 1|},
\end{equation*}
from which it follows at once that
\begin{equation}
\label{eq_I3}
I_3\leq|p(\alpha - \tfrac12) + 1|^{-1}\times\left\{ 
\begin{array}{ll}
  \nu^{p(\alpha - \frac12) + 1}, & 1 \leq p < 4;\\
 \nu^{4\alpha - 1}, & p=4;\\
 \nu^{p(\alpha - \frac13) + \frac{1}{3}}, & 4 < p \leq \infty.
\end{array}
\right.
\end{equation}
In the case $p>4$, we used the simple inequality 
$p(\alpha - \frac12) + 1< p(\alpha - \frac13) + \frac{1}{3}$.
It remains to consider $I_2.$ Here, we use estimate \eqref{eq_Bessel4} and obtain
\begin{equation}
\label{eq_I2}
\begin{split}
    I_2&\lesssim \nu^{-\frac p4} \int_{\frac{\nu}2}^{2 \nu} (|r-\nu|+\nu^{\frac13})^{-\frac p4}r^{\alpha p}\,\d r \leq 2^{\alpha p} \nu^{-\frac p4+\alpha p} \int_{\frac{\nu}2}^{2 \nu} (|r-\nu|+\nu^{\frac13})^{-\frac p4}\,\d r\\
    &= 2^{\alpha p}\nu^{-\frac p4+\alpha p}\left(\int_{\frac{\nu}2}^{\nu} (-r+\nu+\nu^{\frac13})^{-\frac p4}\,\d r+\int_{\nu}^{2\nu} (r-\nu+\nu^{\frac13})^{-\frac p4} \,\d r \right).
    \end{split}
\end{equation}
The latter integrals can be explicitly computed.
If $1\le p<4$, then we note that $2^{\alpha p}<16^\alpha\lesssim 1$ since $\alpha<\frac14$, yielding
\begin{equation*}
I_2\lesssim \frac{\nu^{-\frac p4+\alpha p-\frac p4+1}}{4-p}=  \frac{\nu^{p(\alpha - \frac12) + 1}}{4-p}.
\end{equation*} If $p=4$, then we similarly have that $2^{\alpha p}\lesssim 1$, and thus  
\[
I_2\lesssim \nu^{4\alpha-1 }\left(\int_{\frac{\nu}2}^{\nu} (-r+\nu+\nu^{\frac13})^{-1}\,\d r+\int_{\nu}^{2\nu} (r-\nu+ \nu^{\frac13})^{-1}\,\d r \right)\lesssim  \nu^{4\alpha-1}\log\nu.
\]
Finally, when $p>4$, then  \eqref{eq_I2} boils down to
\[
I_2\lesssim \frac{2^{\alpha p}}{p-4}\nu^{-\frac p4+\alpha p-\frac p{12}+\frac13}= \frac{2^{\alpha p}}{p-4} \nu^{p(\alpha - \frac13) + \frac13}.
\]
Altogether we justified that
\begin{equation}
\label{eq_I2'}
I_2\lesssim \left\{ 
\begin{array}{ll}
 (4-p)^{-1}\nu^{p(\alpha - \frac12) + 1}, & 1 \leq p < 4;\\
\nu^{4\alpha - 1}\log\nu, & p=4;\\
2^{\alpha p} (p-4)^{-1} \nu^{p(\alpha - \frac13) + \frac{1}{3}}, & 4 < p \leq \infty.
\end{array}
\right.
\end{equation}
The estimates \eqref{eq_I1}--\eqref{eq_I3} and  \eqref{eq_I2'} together imply  the uniform upper bound  \eqref{eq_Stempak}.
\medskip

 To complete the proof of the upper bound, it remains to consider $p=\infty$ and justify \eqref{eq_Stempak_inf}. If $\nu$ and $\alpha$ are fixed numbers satisfying $-\nu<\alpha<1/2,$ then, for large enough $p$, we have
\[
 |p(\nu+\alpha)+1|^{-1}+(p-4)^{-1}+|p(\alpha - \frac12)+1|^{-1}\le 3,
\]
and $-\nu-\frac{1}{p}<\alpha<\frac{1}{2}-\frac{1}{p}.$
Consequently, \eqref{eq_Stempak_inf} follows from  letting $p\to \infty$ in \eqref{eq_Stempak}.

We now prove the lower bound \eqref{eq_Stempak_below}. It suffices to show that
\begin{equation}\label{eq_I3_below}
I_3\gtrsim^p \frac{(30)^{\alpha p}}{|p(\alpha - \tfrac12) + 1|}\nu^{p(\alpha - \frac12) + 1}.
\end{equation}
Appealing to Lemma \ref{lem_Krasikov}, we decompose $J_\nu(r)$ as in \eqref{eq_Krasikov}. Inequality \eqref{eq_I3_below} will follow once we justify that
\begin{equation}
\label{eq_goal_below}
\int_{10\nu}^{\infty}\left|{\sqrt{\frac2{\pi}}}\frac{|\cos(\mathcal B(r)-\omega_{\nu})|}{(r^2-\mu)^{\frac14}}-|g(r,\nu)|\right|^p r^{p\alpha }\,\d r\gtrsim^p \frac{(30)^{\alpha p}}{|p(\alpha - \frac12) + 1|}\nu^{p(\alpha - \frac12) + 1}.
\end{equation}
With that purpose in mind, we change variables $t=\mathcal B(r)-\omega_{\nu}$ whenever $r>10\nu$. In this case, we have  $r\leq \mathcal B(r)\leq \frac{6}{5}r$, and so $\frac45 r\leq \mathcal B(r)-\omega_\nu\leq \frac{6}{5}r$, that is, $t\simeq r$. Further note that 
\[
\mathcal B'(r)=\left(1-\frac{\mu}{r^2}\right)^{\frac12}\in\left(\tfrac9{10},1\right) \text{ if } r>10\nu.
\]
It follows that $\d t=\mathcal B'(r)\d r\simeq \d r.$ Denoting $r(t)=\mathcal B^{-1}(t+\omega_{\nu})$ we see that $r(t)\simeq t$ whenever $t>\mathcal B(10\nu)-\omega_{\nu}$ or, more precisely, $\frac56 t\le r(t)\le\frac54 t.$
As a consequence, we obtain
\begin{align*}
&\int_{10\nu}^{\infty}\left|{\sqrt{\frac2{\pi}}}\frac{|\cos(\mathcal B(r)-\omega_{\nu})|}{(r^2-\mu)^{\frac14}}-|g(r,\nu)|\right|^p r^{\alpha p}\,\d r\\
&\gtrsim^p \left(\frac 54\right)^{\alpha p}\int_{\mathcal B(10\nu)-\omega_{\nu}}^{\infty}\left|\sqrt{\frac2{\pi}}\frac{|\cos t|}{(r^2(t)-\mu)^{\frac14}}-|g(r(t),\nu)|\right|^p t^{\alpha p}\,\d t \\
& \gtrsim^p \left(\frac 54\right)^{\alpha p}\int_{12\nu}^{\infty}\left|\sqrt{\frac2{\pi}}\frac{|\cos t|}{(r^2(t)-\mu)^{\frac14}}-|g(r(t),\nu)|\right|^p t^{\alpha p}\,\d t.    
\end{align*}
Hence, our task reduces to proving that
\begin{equation}
\label{eq_I3_below'}
\int_{12\nu}^{\infty}\left|\sqrt{\frac2{\pi}}\frac{|\cos t|}{(r^2(t)-\mu)^{\frac14}}-|g(r(t),\nu)|\right|^p t^{\alpha p}\,\d t\gtrsim^p
\frac{(24)^{\alpha p}}{|p(\alpha - \frac12) + 1|}\nu^{p(\alpha - \frac12) + 1}.
\end{equation}
Given $k\in \N$, let $U_k:=[2\pi k,2\pi k+{\pi}/3)$. We have $|U_k|=\pi/3$ and $\cos t>1/2$ if $t\in U_k$. Thus, if 
$k\geq 1,$ then applying the estimate for $g$ from \eqref{eq_Bg} we have
  \begin{equation}
  \label{eq:cos-er bel}
  \sqrt{\frac2{\pi}}\frac{|\cos t|}{(r^2(t)-\mu)^{\frac14}}-|g(r(t),\nu)|\ge \frac{\frac14-(r^2(t)-\mu)^{-\frac12}}{(r^2(t)-\mu)^{\frac14}}\ge \frac{1}{2\sqrt{5t}} ,\qquad t\in U_k,
  \end{equation}
  where in the last inequality above we used the observation that $$25t^2/16\ge r^2(t)-\mu \ge 25t^2/36-\mu \ge t^2/2\ge 16,\qquad t> 12\nu.$$
  Since  $\alpha-\frac12<0$ we have
  $t^{p(\alpha-\frac12)}\geq  (2\pi (k+1))^{p(\alpha-\frac12)}$ for $t\in [2\pi k,2\pi (k+1))$. Thus using  \eqref{eq:cos-er bel} and disjointness of the $U_k$, we obtain
\begin{equation}
\label{eq:coi}
\begin{split}
&\int_{12\nu}^{\infty}\left|\sqrt{\frac{2}{\pi}}\frac{|\cos t|}{(r^2(t)-\mu)^{\frac14}}-|g(r(t),\nu)|\right|^p t^{\alpha p}\,\d t\\
&\gtrsim^p\sum_{k=\lceil\frac{8\nu}{\pi}\rceil}^\infty \int_{U_k}t^{p(\alpha-\frac12)}\,\d t  \gtrsim \sum_{k=\lceil\frac{8\nu}{\pi}\rceil}^\infty  (2\pi)\cdot\left(2\pi (k+1)\right)^{p(\alpha-\frac12)} \gtrsim   \sum_{k=\lceil\frac{8\nu}{\pi}\rceil}^\infty \int_{2\pi(k+1)}^{2\pi(k+2)} t^{p(\alpha-\frac12)}\,\d t\\
&=\int_{2\pi(\lceil\frac{8\nu}{\pi}\rceil+1)}^{\infty}t^{p(\alpha-\frac12)}\,\d t \ge \int_{16\nu+4\pi}^{\infty}t^{p(\alpha-\frac12)}\,\d t=\frac{(16\nu+4\pi)^{p(\alpha - \frac12) + 1}}{|p(\alpha - \tfrac12) + 1|}.
\end{split}
\end{equation}
Finally noting that $16\nu+4\pi \le 24 \nu,$ for $\nu\ge2,$ we see that $(16\nu+4\pi)^{p(\alpha - \frac12)+1}\gtrsim (24)^{p(\alpha-1/2)}\nu^{p(\alpha - \frac12)+1,}.$ In view of \eqref{eq:coi} this implies \eqref{eq_I3_below'} and hence  \eqref{eq_goal_below}, from which \eqref{eq_I3_below} follows. This completes the proof of the uniform lower bound \eqref{eq_Stempak_below}.

 It remains to justify \eqref{eq_Stempak_below_inf}. Note that, for fixed $\nu$ and $\alpha$ satisfying $-\nu<\alpha<1/2$ and large enough $p$, it holds
\[
|p(\alpha - \tfrac12)+1|=(1/2-\alpha)p-1\le (1/2-\alpha)p,
\]
and $-\nu-\frac1p<\alpha<\frac12-\frac1p.$
Hence, letting $p\to \infty$ in \eqref{eq_Stempak_below}, we reach \eqref{eq_Stempak_below_inf}.

This completes the proof of Proposition \ref{prop_Stempak}.
\end{proof}

\section{Proof of Theorem \ref{thm_ST}}\label{sec_PT2}
The proof of Theorem \ref{thm_ST} naturally splits into the proofs of estimates \eqref{eq_WeakStein} and \eqref{eq_STradialfctns}.

\subsection{Proof of\eqref{eq_WeakStein}}
By duality, it suffices to prove that 
\begin{equation}\label{eq_goal17}
    \|f\ast\widehat\sigma\|_{L^{p_\bullet'}(\R^d)}\lesssim d \|f\|_{L^{p_\bullet}(\R^d)}.
\end{equation}
We will follow the outline in \cite[\S 11.2.2]{MS13}, tracking down the dimensional dependence throughout the argument.
In light of identity \eqref{eq_sigmahat} and estimate \eqref{eq_Bessel1}, there exists an absolute constant $C_\bullet<\infty$, such that
\begin{equation}\label{eq_sigmahatBd}
    |\widehat\sigma
(\xi)|\leq \min\{\sigma(\mathbb S^{d-1}), C_\bullet\nu^{\frac16}|\tau|^{\frac{1-d}2}\},
\end{equation}
where  $\xi=(\xi',\tau)\in\R^{d-1}\times\R.$
 The next result ensures that the propagator $U(\tau)$ defined by
 \begin{equation}\label{eq_Udef}
    [U(\tau)g](\xi'):=\int_{\R^{d-1}} \widehat\sigma(\xi'-\eta',\tau) g(\eta')\,\d \eta',
\end{equation}
 is bounded from $L^1$ to $L^\infty$, and on $L^2$, with good control on the operator norm. Let $\langle x\rangle := (1+|x|^2)^{\frac12}$.
 \begin{lemma}\label{prop_Uest}
 Given $d\geq 2$, let $\nu=d/2-1$ and $C_\bullet$ be as in \eqref{eq_sigmahatBd}. Define the quantity
 \begin{equation}\label{eq_defCd}
     C_d:=\sigma(\mathbb S^{d-1}) \langle\left(\tfrac{C_\bullet \nu^{\frac16}}{\sigma(\mathbb S^{d-1})}\right)^{\frac2{d-1}}\rangle^{\frac{d-1}2}.
 \end{equation}
 Then the following estimates hold:
 \begin{equation}\label{eq_1infty}
     \|U(\tau)g\|_{L^\infty(\R^{d-1})} \leq C_d\langle\tau\rangle^{\frac{1-d}2}\|g\|_{L^1(\R^{d-1})};
 \end{equation}    
  \begin{equation}\label{eq_22}
\|U(\tau)g\|_{L^2(\R^{d-1})}\leq 2\|g\|_{L^2(\R^{d-1})}.
 \end{equation}    
 \end{lemma}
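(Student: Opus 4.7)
The proof splits naturally into two parts, one for each of \eqref{eq_1infty} and \eqref{eq_22}.

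For \eqref{eq_1infty}, the plan is to apply Young's convolution inequality on $\R^{d-1}$ to obtain $\|U(\tau)g\|_{L^\infty} \leq \|\widehat\sigma(\cdot,\tau)\|_{L^\infty(\R^{d-1})}\|g\|_{L^1}$, reducing the task to the pointwise bound $\|\widehat\sigma(\cdot,\tau)\|_\infty \leq C_d\langle\tau\rangle^{(1-d)/2}$. Starting from the two-branch estimate \eqref{eq_sigmahatBd}, I would carry out a case analysis at the threshold $T := (C_\bullet\nu^{1/6}/\sigma(\mathbb S^{d-1}))^{2/(d-1)}$, i.e.\ the unique value of $|\tau|$ at which the two branches of the minimum in \eqref{eq_sigmahatBd} coincide. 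For $|\tau| \leq T$, the first branch of the minimum combined with the monotonicity $\langle\tau\rangle \leq \langle T\rangle$ yields $\sigma(\mathbb S^{d-1}) \leq C_d\langle\tau\rangle^{(1-d)/2}$; for $|\tau| \geq T$, the second branch combined with the algebraic identity $\sigma(\mathbb S^{d-1})T^{(d-1)/2} = C_\bullet\nu^{1/6}$ gives the same bound. The Japanese bracket built into the definition \eqref{eq_defCd} of $C_d$ is tailored precisely so that the two cases match seamlessly at $|\tau| = T$.

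For \eqref{eq_22}, the plan is to invoke Plancherel's theorem on $\R^{d-1}$ to reduce the $L^2 \to L^2$ bound to an $L^\infty$ bound on the Fourier multiplier
\[
m_\tau(y') := \mathcal F_{\xi' \to y'}\bigl[\widehat\sigma(\cdot,\tau)\bigr](y').
\]
Writing $\widehat\sigma(\xi',\tau) = \int_{\mathbb S^{d-1}} e^{-2\pi i(\omega'\cdot\xi' + \omega_d\tau)}\,\d\sigma(\omega)$ and exchanging the order of integration, the $\xi'$-integral formally produces $\delta(\omega'+y')$, which slices the sphere $\mathbb S^{d-1}$ at the two antipodal points $(-y',\pm\sqrt{1-|y'|^2})$ whenever $|y'|<1$. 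Parametrizing each hemisphere by $\omega'$ with Jacobian $\d\sigma = \d\omega'/\sqrt{1-|\omega'|^2}$ then yields
\[
m_\tau(y') = \frac{2\cos\bigl(2\pi\tau\sqrt{1-|y'|^2}\bigr)}{\sqrt{1-|y'|^2}}\,\mathbf 1_{|y'|<1},
\]
where the prefactor $2$ already reflects the two hemispheres contributing equally.

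The main obstacle for \eqref{eq_22} will be the apparent $(1-|y'|^2)^{-1/2}$ singularity of $m_\tau$ near the boundary $|y'|=1$, which prevents a naive $L^\infty$-bound of $m_\tau$ by $2$. Overcoming this will likely require either exploiting cancellation in the cosine factor jointly with the precise sliced geometry, or factorizing $U(\tau) = E_\tau E_\tau^*$ through a suitable extension-type operator $E_\tau$ whose $L^2$ norm can be controlled by $\sqrt{2}$ via weighted Plancherel on the sphere, with the boundary weight in the Jacobian absorbed into the definition of $E_\tau$ rather than appearing in the pointwise multiplier bound.
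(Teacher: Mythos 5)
Your treatment of \eqref{eq_1infty} is correct and is the same as the paper's: Young's inequality reduces matters to bounding $\|\widehat\sigma(\cdot,\tau)\|_{L^\infty(\R^{d-1})}$ pointwise, and the two-branch case analysis around the crossover value $\tau_\star$, using the monotonicity of $\langle\tau\rangle^\alpha$ (for $\alpha>0$) and $(|\tau|/\langle\tau\rangle)^\beta$ (for $\beta<0$) and the fact that both branches take the value $C_d$ at $|\tau|=\tau_\star$, does the job.

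For \eqref{eq_22}, your Fourier multiplier computation is the correct one, and it differs from what the paper writes. Each hemisphere of $\mathbb S^{d-1}$ is a graph over the unit ball of $\R^{d-1}$ with Jacobian $(1-|\omega'|^2)^{-1/2}$, so for $|x'|<1$ the multiplier is
\[
K(x',\tau)=\frac{2\cos\bigl(2\pi\tau\sqrt{1-|x'|^2}\bigr)}{\sqrt{1-|x'|^2}},
\]
exactly as you found; the paper's proof omits the Jacobian, records $K(x',\tau)=2\cos(2\pi\tau\sqrt{1-|x'|^2})$ for $|x'|\le1$, and deduces $|K|\le 2$. (You can confirm your version by integrating it over $x'\in\R^{d-1}$ to recover $\widehat\sigma(0,\tau)$, or by specializing to $d=2$ and comparing with the classical one-dimensional Fourier pair involving $J_0$.) You have therefore located a genuine gap in the paper's argument. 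Unfortunately, the two repairs you float cannot succeed, because the singularity is not apparent but real: taking $\tau=0$ kills the oscillation entirely and leaves $K(\cdot,0)=2(1-|\cdot|^2)^{-1/2}{\bf 1}_{\{|\cdot|<1\}}$, which is not essentially bounded, so by Plancherel $U(0)$ is \emph{not} a bounded operator on $L^2(\R^{d-1})$ and \eqref{eq_22} fails as stated, for any finite constant and not merely for $2$. Neither cancellation in the cosine nor an $E_\tau E_\tau^*$ factorization can produce a finite $L^2\to L^2$ norm for this untruncated operator. The standard remedy in the fractional-integration proof (compare the setting of \cite[\S11.2.2]{MS13}) is first to split $\sigma$ into finitely many pieces supported on spherical caps that are graphs of bounded slope, so that the Jacobian, and hence each slice multiplier, is uniformly bounded; one must then track how that decomposition scales with $d$ in order to still obtain the claimed $O(d^{1/2})$ bound in \eqref{eq_WeakStein}.
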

 \begin{proof}
 Estimate \eqref{eq_1infty} follows from \eqref{eq_sigmahatBd}--\eqref{eq_Udef} via  Hölder's inequality and further elementary considerations. Indeed, 
 \begin{equation}\label{eq_min}
     \langle\tau\rangle^{\frac{d-1}2}\min\{\sigma(\mathbb S^{d-1}), C_\bullet\nu^{\frac16}|\tau|^{\frac{1-d}2}\}= \begin{cases} \sigma(\mathbb S^{d-1})\langle\tau\rangle^{\frac{d-1}2}, & \text{ if } |\tau|\leq \tau_\star:=(\frac{C_\bullet\nu^{\frac16}}{{\sigma(\mathbb S^{d-1})}})^{\frac2{d-1}}, \\
            C_\bullet\nu^{\frac16} (\frac{|\tau|}{\langle\tau\rangle})^{\frac{1-d}2}, &\text{ otherwise}.
            \end{cases}
 \end{equation}
 Moreover,  $\tau\mapsto\langle\tau\rangle^\alpha$ defines a radially increasing function of $\tau$ if $\alpha>0$, whereas $\tau\mapsto(|\tau|/\langle\tau\rangle)^\beta$ defines a radially decreasing function of $\tau$ if $\beta<0$, and when $|\tau|=\tau_\star$ the two branches of the function defined in \eqref{eq_min} take the value $C_d$ defined in \eqref{eq_defCd}. 
 
 Estimate \eqref{eq_22} follows from Plancherel's identity since the spatial Fourier transform of $U(\tau)g$ is given by 
 \begin{align*}
     [\widehat{U(\tau)g}](x') 
     &= \int_{\R^{d-1}} [U(\tau)g](\xi') e^{-2\pi i x'\cdot \xi'}\,\d \xi'\\
     &=\int_{\R^{d-1}} \int_{\R^{d-1}} \widehat\sigma(\xi'-\eta',\tau) g(\eta')e^{-2\pi i x'\cdot \xi'}\,\d \eta' \,\d \xi'\\
     &=\int_{\R^{d-1}} \int_{\R^{d-1}} \widehat\sigma(\xi'-\eta',\tau) e^{-2\pi i x'\cdot (\xi'-\eta')} \,\d \xi'g(\eta')e^{-2\pi i x'\cdot\eta'}\,\d \eta'\\
     &=K(x',\tau)\int_{\R^{d-1}}  g(\eta') e^{-2\pi ix'\cdot\eta'}\,\d \eta'=K(x',\tau)\widehat g(x'),
 \end{align*}
     where the kernel $K=K_d$ is given by
     \begin{align*}
         K(x',\tau)&:=\int_{\R^{d-1}} \widehat\sigma(\xi',\tau)  e^{-2\pi i x'\cdot \xi'}\,\d \xi'\\
         &=\int_{\R^{d-1}} \int_{\mathbb S^{d-1}} e^{-2\pi i\omega\cdot(\xi',\tau)}\,\d\sigma(\omega)  e^{-2\pi i x'\cdot \xi'}\,\d \xi'\\
        &=\int_{\mathbb S^{d-1}}\int_{\R^{d-1}}  e^{-2\pi i(\omega'+x')\cdot \xi'} \,\d \xi'e^{-2\pi i\tau\omega_d}\,\d\sigma(\omega)\\
        &=\int_{\mathbb S^{d-1}} \boldsymbol{\delta}(\omega'+x')e^{-2\pi i\tau\omega_d}\,\d\sigma(\omega)
        =              \begin{cases} 2\cos(2\pi \tau\sqrt{1-|x'|^2}), & \text{ if } |x'|\leq 1, \\
            0, &\text{ otherwise}.
            \end{cases}
     \end{align*}
      On the third line above, we wrote $(\omega',\omega_d)=\omega\in\mathbb S^{d-1}$. It follows that $|K(x',\tau)|\leq 2$, from where the desired estimate \eqref{eq_22} follows at once.
 \end{proof}
 Lemma \ref{prop_Uest} and interpolation together yield, for $1\leq p\leq 2$,
 \begin{equation}\label{eq_pp'est}
     \|U(\tau)g\|_{L^{p'}(\R^{d-1})}\leq 4^{1/p'}C_d^{2/p-1} \langle\tau\rangle^{-\alpha(d,p)} \|g\|_{L^p(\R^{d-1})},
 \end{equation}
 where $C_d$ was defined in the statement of Lemma \ref{prop_Uest}, and $\alpha(d,p):=\frac{d-1}2(\frac1p-\frac1{p'})>0.$
From Minkowski's integral inequality and estimate \eqref{eq_pp'est}, it follows that
\begin{align}
 \|f\ast\widehat\sigma\|_{L^{p_\bullet'}(\R^d)}
&\leq\left\|\int_{\R} \|U(t-s)f(\cdot,s)\|_{L^{p_\bullet'}(\R^{d-1})}\,\d s\right\|_{L_t^{p_\bullet'}(\R)}  \notag  \\
&\leq 4^{1/p_\bullet'}C_d^{2/p_\bullet-1} \left\|\int_{\R}\langle t-s\rangle^{-\alpha(d,p_\bullet)} g(s)\,\d s\right\|_{L_t^{p_\bullet'}(\R)}, \label{eq_preHLS}
\end{align}
where $g(s):=\|f(\cdot,s)\|_{L^{p_\bullet}(\R^{d-1})}$.  
Recalling identity \eqref{eq_sphere} and Lemma \ref{lem_Stirling}, from definition \eqref{eq_defCd} one checks that $4^{1/p_\bullet'}C_d^{2/p_\bullet-1}\to 2$, as $d\to\infty$. 
\medskip

 The last ingredient we need is the diagonal case of the one-dimensional sharp Hardy--Littlewood--Sobolev inequality; for a proof, see \cite{Li83} or \cite[\S 4.3]{LL01}.
     \begin{lemma}[\cite{Li83}]\label{lem_sharpHLS}
         Given $\lambda\in(0,1)$, let $p=2/(2-\lambda)$. Then 
         \[\left|\int_\R\int_{\R} f(x) |x-y|^{-\lambda} g(y)\,\d x\,\d y\right| \leq {\bf L}(\lambda) \|f\|_p\|g\|_p,\]
     where the best constant is given by 
     \begin{equation}\label{eq_bestHLS}
         {\bf L}(\lambda)=\pi^{\frac\lambda2}\frac{\Gamma(\frac{1-\lambda}2)}{\Gamma(1-\frac{\lambda}2)}\left(\frac{\Gamma(\frac12)}{\Gamma(1)}\right)^{\lambda-1}.
     \end{equation}
     \end{lemma}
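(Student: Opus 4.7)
Following Lieb's argument in \cite{Li83} specialized to dimension one, I would proceed through rearrangement, conformal transfer to $\mathbb S^1$, and the method of competing symmetries. Writing $B(f,g):=\iint_{\R\times\R} f(x)|x-y|^{-\lambda}g(y)\,\d x\,\d y$, I would first invoke the Riesz rearrangement inequality (applied to the symmetric decreasing kernel $|x-y|^{-\lambda}$) to conclude $B(f,g)\le B(f^*,g^*)$, noting that symmetric decreasing rearrangement preserves $L^p$ norms. The distributional Fourier transform of $|\cdot|^{-\lambda}$ on $\R$ is a positive multiple of $|\xi|^{\lambda-1}$, so $B$ is positive semidefinite, and Cauchy--Schwarz $B(f,g)\le B(f,f)^{1/2}B(g,g)^{1/2}$ further reduces the task to bounding $B(f,f)$ for nonnegative symmetric decreasing $f$.

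Next, I would pass to $\mathbb S^1$ via the stereographic substitution $x=\tan(\theta/2)$, for which $|x-y|=|2\sin((\theta-\phi)/2)|/(2\cos(\theta/2)\cos(\phi/2))$ and $\d x=\tfrac12\sec^2(\theta/2)\,\d\theta$. Setting $f(x)=(\tfrac12\sec^2(\theta/2))^{1/p}F(\theta)$ preserves $\|f\|_p=\|F\|_p$, and the critical relation $\lambda=2-2/p$ makes the $\sec$ and $\cos$ factors cancel exactly, yielding
\[
B(f,f)=\int_0^{2\pi}\int_0^{2\pi}F(\theta)\,|2\sin((\theta-\phi)/2)|^{-\lambda}\,F(\phi)\,\d\theta\,\d\phi,
\]
whose right-hand side is rotation-invariant on $\mathbb S^1$.

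To identify the extremizer I would apply the method of competing symmetries. The functional above carries two distinct invariance groups: rotations of $\mathbb S^1$, and the symmetric decreasing rearrangement on $\R$, which pulls back through stereographic projection to a symmetrization on $\mathbb S^1$ about a fixed pole. Alternating a suitably chosen rotation with this rearrangement produces a sequence of nonnegative functions of constant $L^p$ norm along which $B$ is nondecreasing and which converges strongly in $L^p$ to the unique common fixed point, namely the constant function $F\equiv c$; this corresponds to the Euclidean extremizer $f(x)=c\,(1+x^2)^{-1/p}$. Evaluating the functional at $F\equiv c$ and using
\[
\int_0^{\pi/2}\sin^{-\lambda}u\,\d u=\frac{\sqrt\pi}{2}\frac{\Gamma((1-\lambda)/2)}{\Gamma(1-\lambda/2)},\qquad (2\pi)^{2/p}=2^{2-\lambda}\pi^{2-\lambda},\qquad \Gamma(\tfrac12)=\sqrt\pi,
\]
yields the sharp constant in \eqref{eq_bestHLS} after elementary simplification.

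The main obstacle is the competing symmetries step: one must verify that the constant function is the unique joint fixed point of the two symmetrization operations, and establish strong enough compactness of the iterative scheme to pass to the limit in $B$. This is the technical heart of \cite{Li83}; a detailed self-contained exposition appears in \cite[Ch.~4]{LL01}.
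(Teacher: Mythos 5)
This lemma is stated in the paper as a known, cited result: the label reads \textup{[\cite{Li83}]}, and immediately before the statement the authors write ``for a proof, see \cite{Li83} or \cite[\S 4.3]{LL01}.'' The paper therefore does not prove the lemma, so there is no in-paper proof to compare against; the relevant comparison is with the cited sources. Your sketch is a broadly correct account of the argument as it appears there: Riesz rearrangement, reduction to the quadratic form $B(f,f)$ via Cauchy--Schwarz and the positive definiteness of $|\cdot|^{-\lambda}$ (its Fourier transform on $\R$ being a positive multiple of $|\xi|^{\lambda-1}$), conformal transfer of $B$ to $\mathbb S^1$, identification of the constant extremizer, and a Beta-integral evaluation. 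The competing-symmetries step you describe is the Carlen--Loss argument used in \cite[\S4.3]{LL01}; Lieb's original 1983 paper establishes the existence and conformal classification of extremizers by a somewhat different compactness device, but both routes are standard and yield \eqref{eq_bestHLS}. Your final evaluation checks out: with $p=2/(2-\lambda)$ one has $(2\pi)^{2/p}=(2\pi)^{2-\lambda}$, and
\begin{equation*}
\frac{B(\mathbf 1,\mathbf 1)}{\|\mathbf 1\|_p^2}
=\frac{2\pi\cdot 2^{2-\lambda}\cdot\frac{\sqrt\pi}{2}\,\frac{\Gamma(\frac{1-\lambda}{2})}{\Gamma(1-\frac\lambda2)}}{(2\pi)^{2-\lambda}}
=\pi^{\lambda-\frac12}\,\frac{\Gamma(\frac{1-\lambda}{2})}{\Gamma(1-\frac\lambda2)}
=\pi^{\frac\lambda2}\,\frac{\Gamma(\frac{1-\lambda}{2})}{\Gamma(1-\frac\lambda2)}\left(\frac{\Gamma(\frac12)}{\Gamma(1)}\right)^{\lambda-1}.
\end{equation*}
One small slip: the conformal density change should read $f(x)=\bigl(\tfrac12\sec^2(\theta/2)\bigr)^{-1/p}F(\theta)$ with exponent $-1/p$ rather than $+1/p$; this is what gives $\|f\|_{L^p(\R)}=\|F\|_{L^p(0,2\pi)}$ and produces the exact cancellation of the $\cos(\theta/2),\cos(\phi/2)$ factors under the relation $\lambda=2-2/p$. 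With that sign corrected, the sketch is sound.
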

The desired estimate \eqref{eq_goal17} now follows from \eqref{eq_preHLS} via Lemma \ref{lem_sharpHLS} and duality, once we observe that
$\alpha(d,p_\bullet)=\frac{d-1}{d+1}\in(0,1)$,
and that in this case the corresponding  HLS constant \eqref{eq_bestHLS} satisfies
\begin{equation}\label{eq_Gammalim}
    \frac1d{\bf L}\left(\tfrac{d-1}{d+1}\right)=\frac1d\pi^{\frac{d-1}{2(d+1)}}\frac{\Gamma(\frac1{d+1})}{\Gamma(\frac{d+3}{2(d+1)})}\left(\frac{\Gamma(\frac12)}{\Gamma(1)}\right)^{-\frac{2}{d+1}}\to 1, \text{ as }d\to\infty.
\end{equation}
To verify \eqref{eq_Gammalim}, note that   $x\Gamma(x)\to 1$ as $x\to 0^+$ since $x\Gamma(x)=\Gamma(x+1)$ and $\Gamma(1)=1$.\\

\subsection{Proof of \eqref{eq_STradialfctns}}
The Fourier transform of a radial function $f:\R^d\to\Co$ is radial.
In this case, we abuse notation slightly and write $f(x)=f(r)$, where $x=r\omega\in\R^d$ for some $(r,\omega)\in (0,\infty)\times\mathbb S^{d-1}$. Identity \eqref{eq_sigmahat} then leads to
\begin{equation}\label{eq_rad1}
    \widehat f(\xi)=\int_0^\infty \left(\int_{\mathbb S^{d-1}} e^{-2\pi i \omega\cdot r\xi}\,\d\sigma(\omega)\right) f(r) r^{d-1}\,\d r
=2\pi |\xi|^{-\nu} \int_0^\infty J_\nu(2\pi r |\xi|) f(r) r^{\frac d2}\,\d r.
\end{equation}
The restriction of a radial function to $\mathbb S^{d-1}$ is constant. In particular, for every radial $f:\R^d\to\Co$ and exponent $q\in [1,\infty]$,
\begin{equation}\label{eq_rad2}
    \|\widehat f\|_{L^q(\mathbb S^{d-1})} = \sigma(\mathbb S^{d-1})^{\frac1q} \|\widehat f\|_{L^\infty(\mathbb S^{d-1})}.
\end{equation}
From \eqref{eq_rad1}--\eqref{eq_rad2} with $|\xi|=1$, it follows from Hölder's inequality that
\begin{multline}\label{eq_Holder}
    \frac{\|\widehat f\|_{L^q(\mathbb S^{d-1})}}{\sigma(\mathbb S^{d-1})^{\frac1q}} =2\pi \left|\int_0^\infty J_\nu(2\pi r) f(r) r^{\frac d2}\,\d r\right|\\
    \leq
\frac{2\pi}{\sigma(\mathbb S^{d-1})^{\frac 1p}} \left(\int_0^\infty \left| J_\nu(2\pi r) r^{\frac d2-\frac{d-1}p}\right|^{p'}\,\d r\right)^{\frac1{p'}} \|f\|_{L^p(\R^d)}.
\end{multline}
Equality in \eqref{eq_Holder} occurs if and only if $|f(r)|^p$ is a multiple of $|J_{\nu}(2\pi r) r^{1-d/2}|^{p'}$. Thus  
\begin{equation}\label{eq_Rrad}
    {\bf R}_{\mathbb S^{d-1}} (p  \to q;\textup{rad})=2\pi \sigma(\mathbb S^{d-1})^{\frac{1}{q}-\frac1{p}}\left(\int_0^\infty \left| J_\nu(2\pi r) r^{\frac d2-\frac{d-1}{p}}\right|^{p'}\,\d r\right)^{\frac1{p'}}.
\end{equation}
Identity \eqref{eq_Rrad} also follows from expanding the $L^{p'}$-norm of the extension operator acting on  the constant function.
At the endpoint Stein--Tomas exponent $(p,q)=(p_\bullet,2)$, the powers of $\sigma(\mathbb S^{d-1})$ from  \eqref{eq_Rrad} produce a total contribution of 
\begin{equation}\label{eq_sph2}
    \sigma(\mathbb S^{d-1})^{\frac12-\frac1{p_\bullet}} = \left(\frac{2\pi^{\frac d2}}{\Gamma(\frac d2)}\right)^{-\frac1{d+1}}\lesssim \sqrt{d}, \text{ as } d\to\infty.
\end{equation}
We now prepare to invoke Proposition \ref{prop_Stempak} with $p$ replaced by  $p_\bullet'=2(d+1)/(d-1)$ and $\alpha:=d/2-(d-1)/p_\bullet = \frac12(3-d)/(d+1).$ As  $d\to \infty$, we have $p_\bullet'\to 2$ and $\alpha\to -\frac12$ so that, for sufficiently large $d$,
\[
|p_\bullet'(\nu+\alpha)+1|^{-1}\lesssim 1,\qquad
(4-p_\bullet')^{-1}\simeq 1,\qquad |p_\bullet'(\alpha - \tfrac12)+1|^{-1}\simeq  1.
\]
Estimates \eqref{eq_Stempak}--\eqref{eq_Stempak_below} from Proposition \ref{prop_Stempak} then imply
\[(2\pi)^{\frac d2-\frac{d-1}{p_\bullet}}\left(\int_0^\infty \left| J_\nu(2\pi r) r^{\frac d2-\frac{d-1}{p_\bullet}}\right|^{p_\bullet'}\,\d r\right)^{\frac1{p_\bullet'}} \simeq \nu^{\frac d2-\frac{d-1}{p_\bullet}-\frac12+\frac1{p_\bullet'}}=\nu^{\frac{1-d}{2(d+1)}},\]
and consequently
\[\sigma(\mathbb S^{d-1})^{\frac12-\frac1{p_\bullet}}\left(\int_0^\infty \left| J_\nu(2\pi r) r^{\frac d2-\frac{d-1}{p_\bullet}}\right|^{p_\bullet'}\,\d r\right)^{\frac1{p_\bullet'}} \simeq \left(\frac{2\pi^{\frac d2}}{\Gamma(\frac d2)}\right)^{-\frac1{d+1}}(2\pi)^{\frac{d-3}{2(d+1)}}\nu^{\frac{1-d}{2(d+1)}}=: \ell_d.\]
One  checks that $\ell_d\to \sqrt{2/e}$, as $d\to\infty$, and this concludes the proof of \eqref{eq_STradialfctns} and therefore of Theorem \ref{thm_ST}.
\section{Proof of Theorem \ref{thm_main}}\label{sec_PT1}

From the Riesz--Thorin convexity theorem, we have that 
\begin{equation}\label{eq_RT}
    {\bf R}_{\mathbb S^{d-1}} (p_\theta\to q_\theta) \leq {\bf R}_{\mathbb S^{d-1}} (p_0\to q_0)^{1-\theta}{\bf R}_{\mathbb S^{d-1}} (p_1\to q_1)^\theta,
\end{equation}
provided $\frac1{p_\theta}=\frac{1-\theta}{p_0}+\frac{\theta}{p_1}$ and $\frac1{q_\theta}=\frac{1-\theta}{q_0}+\frac{\theta}{q_1}$, for some $\theta\in[0,1]$.
Since ${\bf R}_{\mathbb S^{d-1}} (1\to\infty)=1$ and ${\bf R}_{\mathbb S^{d-1}} (1\to 1)\leq \sigma(\mathbb S^{d-1})$, inequality \eqref{eq_RT} implies 
\begin{equation}\label{eq_rightRiesz}
    {\bf R}_{\mathbb S^{d-1}} (1\to q)\leq \sigma(\mathbb S^{d-1})^{\frac1q},
\end{equation}
which of course relates to the fact that identity \eqref{eq_rad2} holds for radial functions.
\medskip

The proof of Theorem \ref{thm_main} naturally splits into the proofs of  \eqref{eq_GenConv0}, \eqref{eq_RadConv0} and \eqref{eq_RadDivInf}.

\subsection{Proof of \eqref{eq_GenConv0}}
Let $(\frac1p,\frac1q)\in \textup{A}$. In particular, $\frac1p+\frac1q>1$ and $p<2$. No generality is lost in assuming $p>1$, for otherwise \eqref{eq_rightRiesz}  together with the sentence after \eqref{eq_sphere} imply the desired conclusion.
Let 
\begin{equation}\label{eq_dSuffLarge}
d>\max\left\{4\left(\frac1{p-1}-1\right)^{-1}+1,\left(\frac1p+\frac1q-1\right)^{-1}-1\right\}.
\end{equation}
Note that the first condition  $d>4(\frac1{p-1}-1)^{-1}+1$ is equivalent to $p<p_\bullet$, where $p_\bullet=2(d+1)/(d+3)$ denotes the endpoint Stein--Tomas exponent as usual. From Theorem \ref{thm_ST}, we know that
\begin{equation}\label{eq_STbound2}
    {\bf R}_{\mathbb S^{d-1}} (p_\bullet\to 2)\leq Cd^{\frac12}.
\end{equation}
Define an auxiliary exponent $q(p)$ via
\begin{equation}\label{eq_defqp}
    \frac1{q(p)}:=1+\frac1{d+1}-\frac1p;
\end{equation}
see Fig.\@ \ref{fig_proof}.
From \eqref{eq_rightRiesz}, we then have that
\begin{equation}\label{eq_trivialIntBound}
      {\bf R}_{\mathbb S^{d-1}} (1\to q(1))\leq \sigma(\mathbb S^{d-1})^{\frac1{d+1}}.
\end{equation}

\begin{figure}[htbp]
  \centering

  \begin{tikzpicture} [scale = 5]
    \fill[black!20] (0, 0) rectangle (1, 1);
        \draw[->] (0, 1) -- (1, 0);
        \draw[->] (0.1, 1) -- (1, 0.1);
    \fill[white!60!green] (0, 1) -- (1, 0) -- (1, 1) -- cycle;
    \fill[white!60!red] (0, 0) -- (0, 1) -- (1, 0) -- cycle;
    \draw[very thick, yellow] (0, 1) -- (1, 0);
    \draw[very thick, gray] (0.5, 0.6) -- (0.5, 0.8) node[above, black] {$(\frac1p,\frac1q)$};
    \draw[very thick, gray] (0.5, 0.8) -- (0.5, 0.6) node[right, black] {$(\frac1p,\frac1{q(p)})$};
     \draw[very thick, gray] (1, .1) -- (.1, 1) node[above, black]{$(\frac1{p_\bullet},\frac12)$};
     \draw[very thick, gray] (.1, 1) -- (1, .1) node[right, black]{$(1,\frac1{q(1)})$};
    \draw[thick] (0, 0) -- (0.5, 0); \draw[thick, black!50!green] (0.5, 1) -- (1, 1);
      \draw[thick, black!20!red] (1, 0) -- (0,0); 
      \draw[thick, black] (0, 1) -- (0,0); 
      \draw[thick, black!50!green] (0, 1) -- (1,1); 
    \draw[->] (1, 0) -- (1.05, 0);
    \draw[->] (0, 0) -- (0, 1.05);
    \draw[thick, black!50!green] (1, 1) -- (1, 0);
    \fill[black] (0.1, 1) circle [radius = 0.01];
    \fill[black] (1, 0.1) circle [radius = 0.01];
    \fill[black] (0.5, 0.8) circle [radius = 0.01];
    \fill[black] (0.5, 0.6) circle [radius = 0.01];
    \draw[black] (1,0) circle [radius = 0.015];
  \end{tikzpicture}
  
  \caption{Proof of \eqref{eq_GenConv0} via interpolation.}
  \label{fig_proof}
\end{figure}
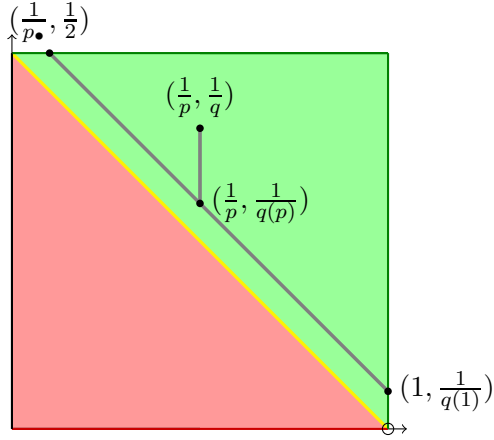

Note that $p\in(1,p_\bullet)$ and $q(p)\in(2,q(1))$.
Interpolating \eqref{eq_STbound2} and \eqref{eq_trivialIntBound} via \eqref{eq_RT}, 
\[{\bf R}_{\mathbb S^{d-1}} (p\to q(p))\leq {\bf R}_{\mathbb S^{d-1}} (p_\bullet\to 2)^{1-\theta}{\bf R}_{\mathbb S^{d-1}} (1\to q(1))^\theta
\leq (C\sqrt d)^{1-\theta} \sigma(\mathbb S^{d-1})^{\frac\theta{d+1}},\]
as long as $\theta\in (0,1]$ satisfies
\begin{equation}\label{eq_defTheta}
    \frac{\theta}{d+1}=\frac2{d-1}\left(\frac1p-\frac1{p_\bullet}\right).
\end{equation}
Note that  $\theta\in(0,1]$ is equivalent to $p\in[1,p_\bullet)$.
 Hölder's inequality then  yields
\begin{align}
    {\bf R}_{\mathbb S^{d-1}} (p\to q)\leq \sigma(\mathbb S^{d-1})^{\frac1q-\frac1{q(p)}}{\bf R}_{\mathbb S^{d-1}} (p\to q(p)) \leq
\sigma(\mathbb S^{d-1})^{\frac1q-\frac1{q(p)}}
(C\sqrt d)^{1-\theta} \sigma(\mathbb S^{d-1})^{\frac\theta{d+1}}\notag\\
=\sigma(\mathbb S^{d-1})^{\frac1p+\frac1q-1-\frac1{d+1}}\sigma(\mathbb S^{d-1})^{\frac2{d-1}(\frac1p-\frac1{p_\bullet})}
(C\sqrt d)^{1-2\frac{d+1}{d-1}(\frac1p-\frac1{p_\bullet})}.\label{eq_HolEst} 
\end{align}
To pass to the last line, we used identities \eqref{eq_defqp} and \eqref{eq_defTheta}. Note that in \eqref{eq_HolEst} both exponents on powers of $\sigma(\mathbb S^{d-1})$ are positive in light of \eqref{eq_dSuffLarge}, whereas the exponent on $C\sqrt d$ is nonnegative since $p\geq 1$, and strictly positive if  $p>1$.
From formula  \eqref{eq_sphere} and Lemma \ref{lem_Stirling}, it follows that the right-hand side of  \eqref{eq_HolEst} tends to 0 as $d\to\infty$, as long as $\frac1p+\frac1q>1$.

\subsection{Proof of \eqref{eq_RadConv0}}\label{sec_42}
As usual, let  $\nu=\frac d2-1.$ By \eqref{eq_sphere} and Lemma \ref{lem_Stirling}, we have 
\begin{equation}
\label{eq:aux1}
\sigma(\mathbb S^{d-1})\simeq (\pi e)^{\nu}\nu^{-(\nu+\frac12)},\text{ as }d\to\infty.
\end{equation}
Let $\alpha:=\frac d2-\frac{d-1}{p},$ and note that $\alpha=\nu(1-\frac2p)+\frac1{p'}.$ For large enough $d$, we then have 
\[(\nu+\alpha)p'+1=d>1,\text{ and }\] 
\begin{equation}\label{eq_awayfrom0}
    p'\left(\alpha-\tfrac12\right)+1=p'(d-1)\left(\tfrac12-\tfrac1p\right)+1<-1. 
\end{equation}
Thus estimate \eqref{eq_Stempak}  applied with $\alpha$ defined as above, and $p'\ge 2$ in place of $p$, yields
\begin{equation}\label{eq:aux2}
\left( \int_0^{\infty} \big| J_{\nu}(r)\,r^{\alpha}\big|^{p'}\,\d r\right)^{\frac1{p'}} \lesssim_{p'} 2^{-\alpha}\nu^{\alpha +\frac12}\le 2^{-\nu(1-\frac2p)+\frac1{p'}}\nu^{\nu(1-\frac2p)+1}
\end{equation}
Recalling \eqref{eq_Rrad} and using \eqref{eq:aux1}--\eqref{eq:aux2}, we then obtain
\[
 {\bf R}_{\mathbb S^{d-1}} (p  \to q;\textup{rad})\lesssim  (\pi e)^{\nu (\frac{1}{q}-\frac1{p})}\nu^{-(\nu+\frac12)(\frac{1}{q}-\frac1{p})}2^{-\nu(1-\frac2p)+\frac1{p'}}\nu^{\nu(1-\frac2p)+1}.
\]
Now, $q=p'$ implies  $\frac1q-\frac1p=1-\frac2p.$ Thus we may continue with
\[
 {\bf R}_{\mathbb S^{d-1}} (p  \to q;\textup{rad})\lesssim \nu^{2}  \left(\frac{\pi e}{2}\right)^{\nu(1-\frac2p)},
\]
which tends to $0$, as $d\to\infty$, because $1\le p<2$ and $\pi e>2.$

\subsection{Proof of \eqref{eq_RadDivInf}}\label{sec_43}
Let $\nu, \alpha$ be defined as in \S\ref{sec_42}. Again \eqref{eq_awayfrom0} holds for all sufficiently large $d,$ and so the lower bound \eqref{eq_Stempak_below} from Proposition \ref{prop_Stempak} implies
\begin{equation}\label{eq:aux3}
\left( \int_0^{\infty} \big| J_{\nu}(r)\,r^{\alpha}\big|^{p'}\,\d r\right)^{\frac1{p'}} \gtrsim 30^\alpha \nu^{\alpha -\frac12}\geq 30^{\nu(1-\frac2p)}\nu^{\nu(1-\frac2p)-\frac12}
\end{equation}
Now, using \eqref{eq_Rrad} together with \eqref{eq:aux1} and \eqref{eq:aux3} we obtain
\[
 {\bf R}_{\mathbb S^{d-1}} (p  \to q;\textup{rad})\gtrsim (\pi e)^{\nu (\frac{1}{q}-\frac1{p})}\nu^{-(\nu+\frac12)(\frac{1}{q}-\frac1{p})}30^{\nu(1-\frac2p)}\nu^{\nu(1-\frac2p)-\frac12}.
\]
Since $-1<\frac1q-\frac1p<1$ and $1-\frac2p-(\frac1q-\frac1p)=1-\frac1p-\frac1q$, we continue with
\[
 {\bf R}_{\mathbb S^{d-1}} (p  \to q;\textup{rad})\gtrsim \nu^{-1}(30 \pi e)^{-\nu}\nu^{\nu(1-\frac{1}{p}-\frac1{q})}.
\]
Finally, noting that $q>p'$ implies $1-\frac{1}{p}-\frac1{q}>0$, we obtain  ${\bf R}_{\mathbb S^{d-1}} (p  \to q;\textup{rad})\to\infty$ as $d\to \infty.$ 
This concludes the proof of Theorem \ref{thm_main}.

\section*{Acknowledgements}
DOS  is partially supported by FCT/Portugal through project UIDB/04459/2020 with DOI identifier 10-54499/UIDP/04459/2020,
 by IST Santander Start Up Funds, and  by the Deutsche Forschungsgemeinschaft 
 (DFG, German Research Foundation) under Germany's Excellence Strategy – EXC-2047/1 – 390685813.
BW is supported by the National
Science Centre, Poland, grant Sonata Bis 2022/46/E/ST1/00036. For the purpose of Open Access the authors have applied a CC BY public copyright licence to any Author Accepted Manuscript (AAM) version arising from this submission.
The authors are grateful to Juan Antonio Barceló for providing a copy of his PhD thesis,  to Luz Roncal and Jim Wright for insightful discussions during the preparation of this work, and to the anonymous referee for valuable suggestions.

\end{document}